\newtheorem{defi}{Definition}[section]
\newtheorem{remark}[defi]{Remark}
\newtheorem{thm}[defi]{Theorem}
\newtheorem{lemma}[defi]{Lemma}
\newtheorem{proposition}[defi]{Proposition}
\newtheorem{corollary}[defi]{Corollary}
\providecommand{\keywords}[1]{\textbf{Keywords} - #1}
\providecommand{\subjclass}[1]{\textbf{2000 Mathematics Subject Classification 2000} - #1}
\begin{document}

\title{Amenability and Orlicz Figa-Talamanca Herz algebras}

\author[1]{Rattan Lal}
\author[1]{N. Shravan Kumar\thanks{Corresponding Author: shravankumar@maths.iitd.ac.in}}

\affil[1]{Department of Mathematics, Indian Institute of Technology Delhi, Delhi - 110016, India.}

\date{}

\maketitle

\begin{abstract}
In this paper, we characterize the amenablity of locally compact groups in terms of the properties of the Orlicz Figa-Talamanca Herz algebras.
\end{abstract}

\keywords{Orlicz Space, Orlicz Figa-Talamanca Herz algebra, Amenable group, Bounded approximate identity, Derivation, Splittings}

\subjclass{Primary 43A07, 43A15; Secondary 46J10}

\section{Introduction}
Let $G$ be a locally compact group and let $A_p(G)$ the Figa-Talamanca Herz algebra introduced by Herz \cite{H1}. The following theorem on the characterization of amenability in terms of the $A_p(G)$ algebras is well-known.
\begin{thm}
Let $G$ be a locally compact group. Then the following are equivalent:
\begin{enumerate}[a)]
\item The group $G$ is amenable.
\item The Banach algebra $A_p(G)$ possesses a bounded approximate identity.
\item Every closed cofinite ideal is of the form $I(E),$ where $E$ is a finite subset of $G.$
\item The Banach algebra $A_p(G)$ factorizes weakly.
\item Each homomorphism from $A_p(G)$ with finite dimensional range is continuous.
\item Every derivation of $A_p(G)$ into a Banach $A_p(G)$-bimodule is continuous.
\end{enumerate}
\end{thm}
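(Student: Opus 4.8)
The plan is to treat condition~(b), the existence of a bounded approximate identity, as the hub. I would first establish the Leptin--Herz equivalence $(a)\Leftrightarrow(b)$, then derive $(c)$, $(d)$, $(e)$, $(f)$ from $(b)$ by general Banach-algebra machinery, and finally close the loop by showing that each of $(c)$--$(f)$ forces amenability, arguing contrapositively from the failure of a bounded approximate identity.

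For $(a)\Rightarrow(b)$ I would exhibit an explicit norm-one approximate identity coming from a F\o lner net: choosing compact sets $K_\alpha$ with $|xK_\alpha\cap K_\alpha|/|K_\alpha|\to 1$ uniformly on compacta and setting $u_\alpha(x)=|K_\alpha|^{-1}|xK_\alpha\cap K_\alpha|$, one sees, via H\"older applied to $\mathbf 1_{K_\alpha}\in L^{p}(G)\cap L^{p'}(G)$, that $\|u_\alpha\|_{A_p(G)}\le 1$; since $u_\alpha\to 1$ uniformly on compacta and the compactly supported elements of $A_p(G)$ are dense, it follows that $\|u_\alpha v-v\|_{A_p(G)}\to 0$ for every $v$. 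For $(b)\Rightarrow(a)$, a bounded approximate identity $(e_\alpha)$ with $\|e_\alpha\|\le C$ has a weak-$*$ cluster point $E$ in $A_p(G)^{**}=PM_p(G)^{*}$ with $\|E\|\le C$ acting as an identity on $A_p(G)$; unwinding what this element is amounts to verifying Reiter's condition for $G$ (equivalently, extracting a topologically invariant mean), hence amenability.

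Starting from $(b)$, the implication $(b)\Rightarrow(d)$ is immediate from Cohen's factorization theorem, since $A_p(G)=A_p(G)\cdot A_p(G)$, so in particular $A_p(G)$ factorizes weakly. For $(b)\Rightarrow(c)$ I would use that $A_p(G)$ is a regular, Tauberian, semisimple commutative Banach algebra with Gelfand spectrum $G$: a closed ideal $I$ of finite codimension then has finite hull $E$ (the quotient being a finite-dimensional commutative algebra), it inherits a bounded approximate identity, and a localization/Ditkin argument at the finitely many points of $E$ forces $I=I(E)$; in particular $I(\{x\})=I(\{x\})^2$, so there are no nonzero point derivations. The implications $(b)\Rightarrow(f)$ and $(b)\Rightarrow(e)$ I would place in the automatic-continuity circle of ideas (Johnson; Bade--Curtis--Dales; Willis): the presence of a bounded approximate identity, together with regularity and the vanishing of point derivations, makes the separating space of a derivation, respectively of a homomorphism with finite-dimensional range, a closed submodule that must reduce to zero.

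It remains to prove $(c)\Rightarrow(a)$, $(d)\Rightarrow(a)$, $(e)\Rightarrow(a)$ and $(f)\Rightarrow(a)$, and this is where I expect the \emph{main obstacle} to lie. I would argue each contrapositively: if $G$ is non-amenable then, by the cornerstone, $A_p(G)$ has no bounded approximate identity, and this failure must be converted into a concrete pathology --- the failure of the algebraic identity $A_p(G)=A_p(G)^2$ for $(d)$; the failure of local factorization (or of Ditkin's condition) at a point, yielding a closed cofinite ideal of the form $\ker\chi_x\cap\ker d$ that is not of the form $I(E)$, for $(c)$; a point derivation at $x$ into a one-dimensional Banach $A_p(G)$-bimodule which is discontinuous, for $(f)$; and the associated discontinuous homomorphism $A_p(G)\to\mathbb{C}[t]/(t^2)$ for $(e)$. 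The positive directions only use general Banach-algebra theory, but manufacturing these pathologies genuinely exploits the harmonic analysis of $A_p(G)$ --- the structure of $PM_p(G)$, the comparison with Reiter's condition, and the precise behaviour of products in the maximal ideals $I(\{x\})$ --- so $(c)\Rightarrow(a)$ and $(d)\Rightarrow(a)$ are the delicate steps. This is essentially the scheme that one would then transpose to the Orlicz Figa--Talamanca--Herz algebras, after re-establishing in that setting the F\o lner-type bounded approximate identity, regularity and the Tauberian property, the identification of the spectrum, Cohen factorization, and the automatic-continuity inputs.
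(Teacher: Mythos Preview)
First, note that the paper does not itself prove this theorem: it is quoted in the introduction as a known result, with $(a)\Leftrightarrow(b)$ attributed to Herz, $(a)\Leftrightarrow(d)$ to Losert, and the remaining equivalences to Forrest. The paper's own work is the Orlicz analogue, so the only meaningful comparison is between your scheme and the paper's arguments for $A_\Phi(G)$.

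Your forward implications line up well with the paper's treatment: the Leptin/F\o lner construction of the bounded approximate identity, Cohen factorization for $(b)\Rightarrow(d)$, and the finite-hull plus spectral-synthesis argument for $(b)\Rightarrow(c)$ all appear in the paper essentially as you describe them. One route difference worth flagging: for $(b)\Rightarrow(a)$ you pass to a weak-$*$ cluster point in the bidual and extract an invariant mean, whereas the paper shows directly that a bounded approximate identity forces $\|L_\mu\|_{CV_\Psi}=\|\mu\|$ for every positive compactly supported measure $\mu$ and then invokes Rao's amenability criterion (Theorem~\ref{AEqC}). In the Orlicz setting this convolution-operator route is essentially forced by the MA-condition; for $A_p$ both approaches are available. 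For $(a)\Rightarrow(f)$ the paper verifies the hypotheses of Jewell's theorem rather than the Bade--Curtis/Willis framework you mention, but the spirit is the same.

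The genuine gap in your proposal is the block $(c),(e),(f)\Rightarrow(a)$. You say that the absence of a bounded approximate identity ``must be converted'' into a bad cofinite ideal, a discontinuous point derivation, and a discontinuous homomorphism, but you do not give the mechanism, and the mere failure of a bounded approximate identity does not by itself manufacture these objects. The paper's (and Forrest's) route is more specific and passes through $(d)$: non-amenability implies weak factorization fails (Losert), which together with the fact that $\{e\}$ is a set of synthesis yields that $I(\{e\})^2$ is not closed (Lemma~\ref{NANF} here). Feeding this single lemma into the Dales--Willis theorem on cofinite ideals then produces, all at once, a closed cofinite ideal that is not idempotent (hence not of the form $I(E)$), a discontinuous homomorphism with finite-dimensional range, and a discontinuous derivation into a finite-dimensional bimodule. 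Without isolating this ``$I(\{e\})^2$ not closed'' step and the appeal to Dales--Willis, your argument for the reverse implications remains a plan rather than a proof.
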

The equivalence of the statements a) and b) was due to Herz \cite{H2}. The equivalence of the statements a), c), e) and f) were due to Forrest \cite{F3}. The equivalence of the statements a) and d) was due to Losert \cite{L1}.

In \cite{LK}, we have introduced and studied the $L^\Phi$-versions of the Figa-Talamanca Herz algebras. Here $L^\Phi$ denotes the Orlicz space corresponding to the Young function $\Phi.$ The space $A_\Phi(G)$ is defined as the space of all continuous functions $u,$ where $u$ is of the form $$u=\underset{n=1}{\overset{\infty}{\sum}}f_n*\check{g_n},$$ where $f_n\in L^\Phi(G),$ $g_n\in L^\Psi(G),$ $(\Phi,\Psi)$ is a pair of complementary Young functions satisfying the $\Delta_2$-condition and $$\underset{n=1}{\overset{\infty}{\sum}}N_\Phi(f_n)\|g_n\|_\psi<\infty.$$ 

This paper has the modest aim of proving the above said equivalent statements in the context of $A_\Phi(G)$ algebras. We shall begin with some preliminaries that are needed in the sequel.

\section{Preliminaries}
Let $ \Phi: \mathbb{R}\rightarrow [0,\infty] $ be a convex function. Then $\Phi$ is called a Young function if it is symmetric and  satisfies $ \Phi(0)= 0 $ and $\underset{x\rightarrow \infty}{\lim}  \Phi(x)= + \infty $. If $ \Phi$ is any Young function, then define $\Psi$ as
$$ \Psi(y):= \sup{\{x|y|-\Phi(x) : x\geq0\}} , y\in\mathbb{R}.$$ Then $\Psi$ is also a Young function and is termed as the complementary function to $ \Phi.$ Further, the pair $ (\Phi,\Psi) $ is called a complementary pair of Young functions. 

Let $G$ be a locally compact group with a left Haar measure $dx.$ We say that a  Young function $ \Phi $  satisfies  the $\Delta_{2}$-condition, denoted $\Phi\in\Delta_{2},$ if there exists a constant $ K>0 $ and $ x_{0} > 0$ such that $\Phi(2x)\leq K\Phi(x)$ whenever $x\geq x_{0}$ if $G$ is compact and the same inequality holds with $ x_{0}=0 $ if $G$ is non compact.

The Orlicz space, denoted $ L^{\Phi}(G),$ is a vector space consisting of measurable functions, defined as $$ {L}^{\Phi}(G) = \left\{ f: G \rightarrow  \mathbb{C}: \mbox{f is measurable and }\int_G\Phi(\beta |f|)\ dx <\infty \text{ for some}~ \beta>0  \right\} $$ The Orlicz space $ L^{\Phi}(G) $ is a Banach space when equipped with the norm
$$N_{\Phi}(f) = \inf \left\{ k>0 :\int_G\Phi\left(\frac{|f|}{k}\right) dx \leq1 \right\}.$$ The above norm is called as the Luxemburg norm or Gauge norm.  If $(\Phi,\Psi)$ is a complementary Young pair, then there is a norm on $L^\Phi(G),$ equivalent to the Luxemberg norm, given by, $$ \|f\|_{\Phi} =\sup \Bigg \{ \int_{G}|fg|dx : \int_{G}\Psi(|g|)dx\leq1 \Bigg\}.$$ This norm is called as the Orlicz norm. 

Let $C_{c}(G)$ denote the space of all continuous functions on $ G $ with compact support. If a Young function $ \Phi $  satisfies  the $\Delta_{2}$ -condition, then $C_c(G)$ is dense in $L^\Phi(G).$ Further, if the complementary function $\Psi$ is such that $\Psi$ is continuous and $\Psi(x)=0$ iff $x=0,$ then the dual of $ (L^{\Phi}(G),N_{\Phi}(\cdot)) $ is isometrically isomorphic to $ (L^{\Psi}(G),\|\cdot\|_{\Psi}).$ In particular, if both $\Phi$ and $\Psi$ satisfies the $\Delta_2$-condition, then $L^\Phi(G)$ is reflexive.

We say that an Young function $\Phi$ satisfies the Milnes-Akimovi$\check{\mbox{c}}$ condition (in short MA-condition) if for each $\epsilon>0$ there exists $c_\epsilon>1$ and an $x_1(\epsilon)\geq 0$ such that $$\Phi^\prime((1+\epsilon)x)\geq c_\epsilon\Phi^\prime(x),\ x\geq x_1(\epsilon).$$ This condition will be used again and again in many of the theorems because of the following result due to M. M. Rao \cite[Theorem 8]{Rao2}.
\begin{thm}\label{AEqC}
Let $G$ be a locally compact group. Then $G$ is amenable if and only if for each $N$-function $\Phi$ satisfying the MA-condition and for each $\nu\in M_1^+(G),$ the operator $T_\nu:L^\Phi\rightarrow L^\Phi$ has norm 1, where $T_\nu(f)=\nu*f.$
\end{thm}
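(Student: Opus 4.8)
\bigskip

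\noindent\textbf{Proof proposal (sketch).} The plan is to treat the two implications separately. Throughout, $M_1^+(G)$ is the set of probability measures on $G$ and $L_yf(x)=f(y^{-1}x)$. I would first record the trivial bound $\|T_\nu\|\le 1$, valid for \emph{every} $\nu\in M_1^+(G)$ and every Young function $\Phi$: writing $\nu*f=\int_G L_yf\,d\nu(y)$ and using left--translation invariance of the Luxemburg norm, Minkowski's integral inequality gives $N_\Phi(\nu*f)\le\int_G N_\Phi(L_yf)\,d\nu(y)=N_\Phi(f)$. So the whole content of the theorem is to decide when this bound is asymptotically attained.

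For the forward implication (amenability $\Rightarrow\|T_\nu\|=1$) I would use Leptin's condition. Fixing $\nu$ and $\epsilon>0$, reduce by a routine truncation to $\nu$ supported in a compact symmetric set $K\ni e$, and use Leptin to produce a compact $U$ with $|U|>0$ and $|KU|\le(1+\epsilon)|U|$; then test $T_\nu$ on $\chi_U$. The function $h=\nu*\chi_U$ is nonnegative, supported in $KU$, has $\int_G h\,dx=|U|$, and satisfies $\int_G\Phi(h/N_\Phi(h))\,dx\le 1$ by definition of $N_\Phi$. Jensen's inequality applied to the convex function $\Phi$ and the normalized restriction of Haar measure to $KU$ then gives $\Phi\!\big(|U|/(N_\Phi(h)|KU|)\big)\le 1/|U|$, and since $\Phi$ is increasing with $N_\Phi(\chi_U)=1/\Phi^{-1}(1/|U|)$ this rearranges to $N_\Phi(\nu*\chi_U)\ge\frac{|U|}{|KU|}N_\Phi(\chi_U)\ge\frac{1}{1+\epsilon}N_\Phi(\chi_U)$. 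Letting $\epsilon\downarrow 0$ yields $\|T_\nu\|=1$. Note this half needs neither the $N$-function hypothesis nor the MA-condition; it is the converse that forces them.

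For the converse ($\|T_\nu\|=1$ for all $\nu\Rightarrow$ amenability) I would argue as follows. By the Milnes-Akimovi$\check{\mbox{c}}$ theorem, the MA-condition is precisely what makes $(L^\Phi(G),N_\Phi)$ uniformly convex (together with the standing $\Delta_2$-hypotheses). Fix such a $\Phi$, take an arbitrary compact symmetric $K\ni e$, and apply the assumption to $\nu_K=|K|^{-1}\chi_K\,dx$: there are nonnegative unit vectors $f_n$ with $N_\Phi(\nu_K*f_n)\to 1$. Since $\nu_K*f_n=|K|^{-1}\int_K L_sf_n\,ds$ is a barycentre over $(K,|K|^{-1}ds)$ of the unit vectors $L_sf_n$, uniform convexity in its continuous-average form (a barycentre of unit vectors whose norm is near $1$ forces the vectors to be mutually close off a set of small measure), together with norm-continuity of $s\mapsto L_sf_n$ (valid under $\Delta_2$), produces a net of nonnegative unit vectors in $L^\Phi(G)$ that is asymptotically invariant under left translation by $K$. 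As $K$ was arbitrary, $G$ has the Orlicz analogue $(P_\Phi)$ of Reiter's property $(P_1)$, and $(P_\Phi)\Leftrightarrow$ amenability: one transports almost-invariant vectors between $L^\Phi(G)$ and $L^1(G)$ through a Mazur-type homeomorphism of the positive parts of their unit spheres, the $\Delta_2$-condition keeping it uniformly continuous (or one appeals to \cite{LK}). A quicker but less self-contained route for this direction is to specialize to $\Phi(x)=x^2$, an $N$-function satisfying the MA-condition with $L^\Phi(G)=L^2(G)$: the hypothesis then reads $\|\lambda(\nu)\|=\|\nu\|$ for all $\nu\in M_1^+(G)$, which is one of the classical characterizations of amenability.

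The main obstacle is the converse. The delicate points are: (i) pinning the MA-condition to uniform convexity of $L^\Phi$ with a usable modulus; (ii) promoting the two-vector uniform-convexity inequality to the continuous barycentre $|K|^{-1}\int_K L_sf\,ds$ so as to obtain honest asymptotic $K$-invariance of a single unit vector rather than merely in-measure statements -- here the measurability and norm-continuity of the orbit map $s\mapsto L_sf$, granted by $\Delta_2$, together with a standard filling-up argument are exactly what is needed; and (iii) the Orlicz Reiter--amenability equivalence $(P_\Phi)\Leftrightarrow$ amenable itself. By contrast the forward implication is routine once Leptin's condition and the Jensen estimate are in hand.

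\bigskip
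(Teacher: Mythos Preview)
The paper does not prove this statement at all: Theorem~\ref{AEqC} is quoted verbatim from \cite[Theorem 8]{Rao2} as a tool to be used later (in the converse halves of Theorems~\ref{ABAI} and~\ref{AWF}), with no argument supplied. So there is no ``paper's own proof'' to compare your attempt against.

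That said, your sketch is a reasonable outline of how such a result is established. A few remarks. In the forward direction your Jensen step is essentially right, though the bound you wrote, $\Phi\!\big(|U|/(N_\Phi(h)|KU|)\big)\le 1/|U|$, actually comes out as $\le 1/|KU|$ from Jensen on the normalized measure on $KU$; since $|KU|\ge|U|$ your weaker inequality is still true and the rearrangement to $N_\Phi(\nu*\chi_U)\ge\frac{|U|}{|KU|}N_\Phi(\chi_U)$ goes through either way. The ``routine truncation'' to compactly supported $\nu$ deserves one more line: having fixed $\epsilon$, choose compact $K$ with $\nu(G\setminus K)<\epsilon$, run the argument for $K\cup K^{-1}\cup\{e\}$, and control the tail by the trivial bound $N_\Phi(\nu|_{K^c}*\chi_U)\le \epsilon\,N_\Phi(\chi_U)$.

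For the converse, your second route is clean and entirely legitimate given how the theorem is phrased: the hypothesis is a universal quantifier over $\Phi$, so instantiating at $\Phi(x)=x^2$ reduces to the classical fact that $\|\lambda(\nu)\|=1$ for all $\nu\in M_1^+(G)$ characterizes amenability. Your first route (MA $\Rightarrow$ uniform convexity $\Rightarrow$ asymptotically invariant vectors $\Rightarrow$ $(P_\Phi)$ $\Rightarrow$ amenable) is closer in spirit to Rao's actual argument and is the one that genuinely exercises the MA-condition; the points you flag as delicate --- the barycentric form of uniform convexity and the $(P_\Phi)\Leftrightarrow$ amenable equivalence --- are indeed where the work lies, and your identification of them is accurate.
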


\noindent For more details on Orlicz spaces, we refer the readers to \cite{RR}.

Let $\Phi$ and $\Psi$ be a pair of complementary Young functions satisfying the $\Delta_2$ condition. Let $$A_\Phi(G)=\left\{u=\underset{n=1}{\overset{\infty}{\sum}}f_n*\check{g_n}:\{f_n\}\subset L^\Phi(G),\{g_n\}\in L^\Psi(G)\mbox{ and }\underset{n=1}{\overset{\infty}{\sum}}N_\Phi(f_n)\|g_n\|_\Psi<\infty\right\}.$$ Note that if $u\in A_\Phi(G)$ then $u\in C_0(G).$ If $u\in A_\Phi(G),$ define $\|u\|_{A_\Phi}$ as $$\|u\|_{A_\Phi}:=\inf\left\{\underset{n=1}{\overset{\infty}{\sum}}N_\Phi(f_n)\|g_n\|_\Psi:u=\underset{n=1}{\overset{\infty}{\sum}}f_n*\check{g_n}\right\}.$$ The space $A_\Phi(G)$ equipped with the above norm and with the pointwise addition and multiplication becomes a commutative Banach algebra \cite[Theorem 3.4]{LK}. In fact, $A_\Phi(G)$ is a commutative, regular and semisimple banach algebra with spectrum homeomorphic to $G$ \cite[Corollary 3.8]{LK}. This Banach algebra $A_\Phi(G)$ is called as the Orlicz Fig\`{a}-Talamanca Herz algebra.

Let $ \mathcal{B}(L^{\Phi}(G))$ be the linear space of all bounded linear operators on $L^{\Phi}(G)$ equipped with the operator norm. For a bounded complex Radon measure $\mu$  on $G$ and $f\in L^{\Phi}(G),$ define $T_\mu:L^\Phi(G)\rightarrow L^\Phi(G)$ by $ T_{\mu}(f)=\mu*f.$ It is clear that $T_\mu\in \mathcal{B}(L^{\Phi}(G)).$ Let $ PM_{\Phi}(G)$ denote the closure of $\{T_{\mu}:\mu\mbox{ is a bounded complex Radon measure}\}$ in $\mathcal{B}(L^{\Phi}(G))$ with respect to the ultraweak topology. It is proved in \cite[Theorem 3.5]{LK}, that for a locally compact group $G,$ the dual of $ A_{\Phi}(G)$ is isometrically isomorphic to $PM_{\Psi}(G).$ 

Let $\mathcal{A}$ be a regular, semisimple, commutative Banach algebra with the Gelfand structure space $\Delta(\mathcal{A}).$ For a closed ideal $I$ of $\mathcal{A},$ the zero set of $I,$ denoted by $Z(I),$ is a closed subset of $\Delta(\mathcal{A})$ defined as $$Z(I)=\{x\in E:\widehat{a}(x)=0\ \forall\ a\in I\}.$$ For a closed subset $E\subset\Delta(\mathcal{A}),$ we define the following ideals in $\mathcal{A}:$
\begin{eqnarray*}
j_{\mathcal{A}}(E) &=& \{a\in\mathcal{A}:\widehat{a}\mbox{ has compact support disjoint from E} \}\\
J_{\mathcal{A}}(E) &=& \overline{j_{\mathcal{A}}(E)} \\
I_{\mathcal{A}}(E) &=& \{a\in\mathcal{A}: \widehat{a} = 0\mbox{ on }E\}.
\end{eqnarray*}
Note that $J_{\mathcal{A}}(E)$ and $I_{\mathcal{A}}(E)$ are closed ideals in $\mathcal{A}$ with the zero set equal to $E$ and $j_{\mathcal{A}}(E)\subseteq I \subseteq I_{\mathcal{A}}(E)$ for any ideal $I$ with zero set $E.$ $E$ is said to be a {\it set of spectral synthesis} (or a \textit{spectral set}) for $\mathcal{A}$ if $I_{\mathcal{A}}(E) = J_{\mathcal{A}}(E).$ Let $I^c_\mathcal{A}(E)$ denote the elements in $I_\mathcal{A}(E)$ with compactly supported Gelfand transforms. We say that $E$ is a set of local spectral synthesis if $I^c_\mathcal{A}(E)\subseteq J_\mathcal{A}(E).$ By \cite[Theorem 3.6]{LK} singletons are sets of spectral synthesis for $A_\Phi(G).$ Further, every closed subgroup is a set of local synthesis for $A_\Phi(G).$ 

The closed set $E$ is a \textit{Ditkin set} if for every $u\in I_{\mathcal{A}}(E),$ there exists a sequence $\{u_n\}\subset j_{\mathcal{A}}(E)$ such that $u.u_n$ converges in norm to $u;$ if the condition holds for every compactly supported $u\in I_{\mathcal{A}}(E)$ then $E$ is called a \textit{local Ditkin set}. If the sequence can be chosen in such a way that it is bounded and is the same for all $u\in I_{\mathcal{A}}(E),$ then we say that $E$ is a \textit{strong Ditkin set.} Note that every Ditkin set is a set of spectral synthesis. The Banach algebra $\mathcal{A}$ is called a strong Ditkin algebra if all the singletons and the empty set are strong Ditkin sets.

For more on spectral synthesis see \cite{Kan, Rei}.

Throughout this paper, $G$ will denote a locally compact group and $(\Phi,\Psi)$ will denote a complementary pair of Young functions satisfying the $\Delta_2$-condition. 

\section{Amenability and bounded approximate identities}
We begin this section with the main result of this paper on the characterization of amenable groups in terms of the existence of bounded approximate identities in $A_\Phi(G).$
\begin{thm}\label{ABAI}
Let $G$ be a locally compact group and let $\Phi$ satisfy the MA condition. Then $G$ is amenable if and only if $A_\Phi(G)$ posseses a bounded approximate identity.
\end{thm}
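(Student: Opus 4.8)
The plan is to follow the classical template for $A_p(G)$, adapting each half to the Orlicz setting, with Theorem~\ref{AEqC} doing the heavy lifting on the ``only if'' direction. For the forward implication, suppose $G$ is amenable. The standard strategy is to produce, for each compact set $K\subseteq G$ and each $\varepsilon>0$, a function $u\in A_\Phi(G)$ with $u\equiv 1$ on $K$ and $\|u\|_{A_\Phi}\leq 1+\varepsilon$; a Cohen-type argument (or a net indexed by pairs $(K,\varepsilon)$ together with the fact that $A_\Phi(G)$ has a bounded approximate identity iff it has such a ``uniformly bounded partial identity on compacta'') then yields the bounded approximate identity. To build such a $u$: using amenability, choose a nonnegative $\phi\in C_c(G)$ which is ``almost invariant'' under left translation by a neighbourhood of $K$ (a Følner-type function), normalised so that $N_\Phi(\phi)\,\|\widetilde\phi\|_\Psi$ is controlled, and set $u = \phi * \check g$ for a suitable $g\in L^\Psi(G)$ supported near the identity with $\int g = 1$; then $u(x)=\int \phi(xy)g(y)\,dy$ is close to $\int\phi\,dg$-type averages, and the almost-invariance forces $u\approx 1$ on $K$. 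The precise norm estimate $\|u\|_{A_\Phi}\le 1+\varepsilon$ is exactly where Theorem~\ref{AEqC} enters: the operator-norm-$1$ statement for $T_\nu$ on $L^\Phi$ (valid since $\Phi$ satisfies the MA condition and $G$ is amenable) is what upgrades a crude bound to a bound arbitrarily close to $1$, by writing the relevant convolution as an average $\int T_{\delta_y}\phi\,dg(y)$ and invoking $\|T_{\delta_y}\|=1$.

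For the converse, suppose $A_\Phi(G)$ has a bounded approximate identity, say bounded by $C$. I would show that for every $\nu\in M_1^+(G)$ the operator $T_\nu\colon L^\Phi\to L^\Phi$ has norm $1$ and then invoke Theorem~\ref{AEqC} to conclude amenability (noting that the MA condition is in force by hypothesis, so the Rao criterion applies). To get $\|T_\nu\|=1$: since $\|T_\nu\|\ge\|T_{\delta_e}\|=1$ trivially, only $\|T_\nu\|\le 1$ needs proof. One exploits the duality $A_\Phi(G)^*\cong PM_\Psi(G)$ from \cite[Theorem 3.5]{LK}: a bounded approximate identity in $A_\Phi(G)$ makes $A_\Phi(G)$ an essential Banach module over a suitable measure/convolution structure, and in particular gives a norm-decreasing (contractive after renormalising, or with constant $C$) action that pins down the multiplier norms of point masses, hence of $\nu=\int\delta_y\,d\nu(y)$. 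Concretely, I would test $\langle T_\nu f, g\rangle$ for $f\in L^\Phi$, $g\in L^\Psi$ against the approximate identity: write $\langle \nu * f, g\rangle$ in terms of the pairing of $\check g * f \in A_\Phi$ (or $f*\check g$) with the pseudomeasure $T_\nu \in PM_\Psi$, use that $A_\Phi$ elements can be approximated using $u_\alpha$ with $\|u_\alpha\|\le C$, and transfer to get $|\langle \nu*f,g\rangle|\le N_\Phi(f)\|g\|_\Psi$ in the limit after the $1+\varepsilon$ slack is absorbed.

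The step I expect to be the main obstacle is the forward direction's norm estimate — specifically, extracting a genuine ``$\le 1+\varepsilon$'' (not merely ``bounded'') bound for $\|u\|_{A_\Phi}$ of the constructed witness function. In the classical $L^p$ proof this rests on a clean computation with $L^p$ and $L^{p'}$ norms of translates being translation-invariant and on Leptin's / Reiter's condition in the form that gives Følner functions with near-optimal $L^p$ ratios; in the Orlicz setting the Luxemburg and Orlicz norms are only \emph{equivalent}, not equal, under convolution averaging, so one must route everything through Theorem~\ref{AEqC}'s contractivity of $T_\nu$ rather than through naive norm identities. Making sure the MA condition is genuinely used (it is the hypothesis of both Theorem~\ref{ABAI} and Theorem~\ref{AEqC}) and that the complementary function's $\Delta_2$ assumption gives enough density of $C_c(G)$ to build the Følner functions inside $L^\Phi$ and $L^\Psi$ simultaneously will be the technical core. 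The converse is comparatively routine once the $A_\Phi(G)^*\cong PM_\Psi(G)$ duality is exploited, modulo bookkeeping to turn the constant-$C$ approximate identity into the sharp operator-norm bound.
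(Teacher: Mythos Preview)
Your plan inverts the role of Theorem~\ref{AEqC}. In the paper, Rao's criterion is used \emph{only} in the converse direction (bounded approximate identity $\Rightarrow$ $G$ amenable); the Remark immediately following the proof says this explicitly, and the forward direction goes through without the MA condition at all. So your identification of the ``main obstacle'' is off: the forward norm estimate is \emph{not} where Theorem~\ref{AEqC} enters, and your proposed mechanism there---writing a convolution as $\int T_{\delta_y}\phi\,dg(y)$ and invoking $\|T_{\delta_y}\|=1$---does not actually use amenability, since left translations are isometries on $L^\Phi$ for any $G$.

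The paper's forward argument is much more concrete than your F{\o}lner-function sketch. It uses Leptin's condition directly: given compact $K$ and $\epsilon>0$, pick compact $C$ of positive measure with $|KC|<(1+\epsilon)|C|$, and set
\[
u_{K,\epsilon}=\frac{1}{(1+\epsilon)|C|}\,\chi_{KC}*\check\chi_C.
\]
Then $u_{K,\epsilon}=\frac{1}{1+\epsilon}$ on $K$, and the $A_\Phi$-norm is bounded by $2$ via the elementary Orlicz identity $N_\Phi(\chi_A)=\bigl[\Phi^{-1}(1/|A|)\bigr]^{-1}$ together with $\Phi^{-1}(t)\Psi^{-1}(t)\leq t$ and monotonicity of $\Phi^{-1}$. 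No attempt is made to get $1+\epsilon$; a uniform bound of $2$ suffices for a bounded approximate identity, and the net $(u_{K,\epsilon})$ indexed by pairs $(K,\epsilon)$ does the job since $u_{K,\epsilon}f=\frac{f}{1+\epsilon}$ whenever $\mathrm{supp}(f)\subseteq K$.

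Your converse is in the right spirit: exploit the duality $A_\Phi(G)^*\cong PM_\Psi(G)$ to show $\|L_\mu\|_{CV_\Psi}=\|\mu\|_{M(G)}$ for positive compactly supported $\mu$, then invoke Theorem~\ref{AEqC}. The paper does exactly this, first for positive $\psi\in C_c(G)$ (citing the argument of \cite[Theorem~10.4]{P} via \cite[Theorem~3.5]{LK}), then bootstrapping to measures by convolving $\mu$ with a fixed positive symmetric $f_0\in C_c(G)$ of $L^1$-norm $1$ and comparing $\langle f'*(f_0*f)\check{\ },\mu\rangle$ with $\langle f'*\check f,\mu*f_0\rangle$. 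Your sketch of ``testing $\langle T_\nu f,g\rangle$ against the approximate identity'' is vaguer than this but aims at the same inequality; just be aware that the sharp constant $1$ (rather than the BAI bound $c$) comes from the submultiplicativity trick $\|\phi\|_1^n\le c\,\|L_\phi\|^n$ and taking $n$-th roots, as made explicit later in the proof of Theorem~\ref{AWF}.
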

\begin{proof}
Suppose that $G$ is amenable. Let $K$ be a compact subset of $G$ and let $\epsilon>0.$ It follows from Leptin's condition \cite[Definition 7.1]{P} that there exists a compact set $C$ in $G$ of non-zero measure such that $|KC|<(1+\epsilon)|C|.$ Let $u_{K,\epsilon} =\frac{1}{(1+\epsilon)|C|}\chi_{KC}*\check \chi_{C}.$ Then $u_{K,\epsilon}\in A_{\Phi}(G)$ and
\begin{align*}
\|u_{K,\epsilon}\|_{A_{\Phi}} 
\leq & \frac{1}{(1+\epsilon)|C|}N_{\Phi}(\chi_{KC})\|\chi_{C}\|_\Psi
\\\leq & \frac{2}{(1+\epsilon)|C|}N_{\Phi}(\chi_{KC})N_{\Psi}(\chi_{C})
\\ \leq & \frac{2}{(1+\epsilon)|C|}\left[\Phi^{-1}\left(\frac{1}{|KC|}\right)\right]^{-1}\left[\Phi^{-1}\left(\frac{1}{|C|}\right)\right]^{-1}
\\\leq & \frac{2}{(1+\epsilon)|C|}\left[\Phi^{-1}\left(\frac{1}{(1+\epsilon)|C|}\right)\right]^{-1}\left[\Phi^{-1}\left(\frac{1}{(1+\epsilon)|C|}\right)\right]^{-1}
\\ < & ~ 2 
\end{align*}
Consider the set $\Lambda=\{(K,\epsilon):K\mbox{ is a compact subset of }G\mbox{ and }\epsilon>0\}$ directed as follows: $(K_1,\epsilon_1)\prec(K_2,\epsilon_2)$ if $K_1\subset K_2$ and $\epsilon_2<\epsilon_1.$ Now consider the net $\{u_{K,\epsilon}\}_{(K,\epsilon)\in\Lambda}$ in $A_\Phi(G).$ We now claim that $\{u_{K,\epsilon}\}_{(K,\epsilon)\in\Lambda}$ is an approximate identity for $A_\Phi(G).$ Let $f \in A_\Phi(G)\cap C_c(G)$ be such that $suppf=K$ and let $\epsilon >0.$ Then $(u_{K,\epsilon}f)(x)= \frac{f(x)}{1+\epsilon}$ if $x\in K$ and $0$ otherwise.	Therefore $$\|u_{K,\epsilon}f-f\|_{A_\Phi}=\frac{\epsilon}{1+\epsilon}\|f\|_{A_\Phi} \leq \epsilon \|f\|_{A_\Phi}.$$ 

We now proceed further to prove the converse. Suppose that $A_\Phi(G)$ posseses an approximate identity $\{u_\alpha\}_{\alpha\in\Lambda}$ bounded by $c,$ for some $c>0.$ For a positive function $\psi\in C_c(G),$ using \cite[Theorem 3.5]{LK}, it can be shown as in \cite[Theorem 10.4]{P}, that $\|\psi\|_1=\|L_\psi\|_{CV_\Psi(G)}.$ We now show that a similar equality holds if we replace $\psi$ by a positive measure having compact support. Let $\mu\in M(G)$ be a positive measure having compact support. Choose $f_0\in C_c(G)$ such that $f_0$ is positive, $\check{f_0}=f_0$ and $\|f_0\|_1=1.$ Note that, for every $f\in C_c(G)$ with $N_\Phi(f)\leq 1,$ we have $f_0*f$ is also positive and has compact support. Further, $$N_\Phi(f_0*f)\leq \|f_0\|_1N_\Phi(f)\leq 1.$$ Also, if $f^\prime\in C_c^+(G),$ then $$\langle f^\prime*(f_0*f\check{)},\mu\rangle=\langle f^\prime*\check{f},\mu*f_0\rangle.$$ Thus 
\begin{eqnarray*}
\|L_\mu\|_{CV_\Psi(G)} &\geq& \sup\{\langle f^\prime*(f_0*f\check{)},\mu\rangle:f,f^\prime\in C_c^+(G)\mbox{ with }N_\Phi(f)\leq 1,N_\Psi(f^\prime)\leq 1\} \\ &=& \sup\{\langle f^\prime*\check{f},\mu*f_0\rangle:f,f^\prime\in C_c^+(G)\mbox{ with }N_\Phi(f)\leq 1,N_\Psi(f^\prime)\leq 1\} \\ &=& \|L_{\mu*f_0}\|_{CV_\Psi(G)} =\|\mu*f_0\|_1=\|\mu\|_{M(G)}.
\end{eqnarray*}
As $\|L_\mu\|_{CV_\Psi(G)}\leq\|\mu\|_{M(G)}$ for all $\mu\in M(G),$ we have $\|L_\mu\|_{CV_\Psi(G)}=\|\mu\|_{M(G)}$ for all positive $\mu$ having compact support. Thus $G$ is amenable, thanks to Theorem \ref{AEqC}.
\end{proof}
\begin{remark}
Note that the condition that $\Phi$ satisfies the MA condition in the above theorem is needed only while proving the converse, i.e., while invoking Theorem \ref{AEqC}. As mentioned in \cite{Rao2}, the assumption that $\Phi$ satisfies the MA condition is needed only to avoid the Riesz-convexity theorem. Note that the proof of the above theorem for the $A_p(G)$-algebras uses the Riesz-convexity theorem. Although an extended Riesz-convexity theorem for Orlicz spaces is available, it cannot be used here.
\end{remark}
We now begin to prove some corollaries. In the first corollary, we characterize amenability in terms of certain weak*-closed $A_\Phi(G)$-submodules of $PM_\Psi(G).$
\begin{corollary}
Let $G$ be a locally compact group, $\Phi$ satisfy the MA-condition and let $X$ be a weak*-closed $A_\Phi(G)$-submodule of $PM_\Psi(G).$ Then $G$ is amenable if and only if the following statements about $X$ are equivalent:
\begin{enumerate}[a)]
\item The space $X$ is invariantly complemented
\item The space $^\perp X$ has a bounded approximate identity.
\end{enumerate} 
\end{corollary}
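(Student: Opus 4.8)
The plan is to reduce the corollary to Theorem~\ref{ABAI} together with a standard module-theoretic principle relating bounded approximate identities in a closed ideal to invariant complementation of its annihilator. I read the statement as: $G$ is amenable if and only if, for \emph{every} weak*-closed $A_\Phi(G)$-submodule $X$ of $PM_\Psi(G)$, the assertions (a) and (b) are equivalent (for a single fixed $X$ the biconditional cannot be literally true, e.g. $X=PM_\Psi(G)$ makes (a) and (b) both trivially hold). Write $\mathcal{A}=A_\Phi(G)$, so $\mathcal{A}^*=PM_\Psi(G)$ by \cite[Theorem 3.5]{LK}. Since $X$ is weak*-closed, the bipolar theorem gives $({}^\perp X)^\perp=X$, and ${}^\perp X$ is a closed ideal of $\mathcal{A}$ exactly because $X$ is a submodule; $X\mapsto{}^\perp X$ is thus a bijection between weak*-closed submodules of $\mathcal{A}^*$ and closed ideals of $\mathcal{A}$. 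So, putting $I:={}^\perp X$, it suffices to prove: $G$ is amenable $\iff$ for every closed ideal $I$ of $\mathcal{A}$, ``$I$ has a bounded approximate identity'' is equivalent to ``$I^\perp$ is invariantly complemented in $\mathcal{A}^*$''.

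For the inner equivalence I would isolate two facts. \textbf{(i)} If $I$ has a bounded approximate identity $(e_\alpha)$ bounded by $M$, then the operators $Q_\alpha\colon T\mapsto e_\alpha\cdot T$ on $\mathcal{A}^*$ are $\mathcal{A}$-module maps of norm $\le M$; passing to a cluster point $Q$ in the topology of pointwise weak*-convergence (possible since each net $(\langle Q_\alpha T,u\rangle)$ is bounded) yields a bounded $\mathcal{A}$-module operator with $QT-T\in I^\perp$ for all $T$ (because $\langle QT-T,u\rangle=\lim_\alpha\langle T,e_\alpha u-u\rangle=0$ for $u\in I$) and $QT=0$ for $T\in I^\perp$ (because $e_\alpha u\in I$ whenever $u\in\mathcal{A}$, so $\langle T,e_\alpha u\rangle=0$). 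Then $P:=\mathrm{id}-Q$ is a bounded module projection of $\mathcal{A}^*$ onto $I^\perp$; note this half uses no amenability. \textbf{(ii)} Conversely, if $\mathcal{A}$ itself has a bounded approximate identity and $I^\perp$ is invariantly complemented, then $I$ has a bounded approximate identity. Here I would adapt the standard argument (cf.~\cite{F3}): given a bounded module projection $P\colon\mathcal{A}^*\to I^\perp$ and a bounded approximate identity $(u_\alpha)$ of $\mathcal{A}$ with weak* cluster point $E\in\mathcal{A}^{**}$, one uses the second dual and the Arens module structure to produce a bounded element of $I^{**}$ that is a one-sided identity for $I$ in the weak* sense, and then a convexity argument (Mazur's theorem, weak closure $=$ norm closure of a convex set) upgrades it to a genuine norm-bounded approximate identity of $I$.

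With (i) and (ii) in hand the corollary follows. Suppose $G$ is amenable. By the ``only if'' direction of Theorem~\ref{ABAI} --- which, as noted in the remark there, does not need the MA condition --- $\mathcal{A}=A_\Phi(G)$ has a bounded approximate identity; hence for every closed ideal $I$ both (i) and (ii) apply, so (a) $\iff$ (b) holds for every weak*-closed submodule $X$. Conversely, suppose (a) $\iff$ (b) holds for every such $X$, and specialise to $X=\{0\}$, i.e.\ $I={}^\perp X=A_\Phi(G)$: then $I^\perp=\{0\}$ is invariantly complemented (the zero operator is a bounded module projection onto it), so by hypothesis $A_\Phi(G)$ has a bounded approximate identity, and therefore $G$ is amenable by the ``if'' direction of Theorem~\ref{ABAI} --- and this last step is precisely where the MA condition, through Theorem~\ref{AEqC}, is used.

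The main obstacle is step (ii): extracting a bounded approximate identity of $I$ from an invariant bounded projection onto $I^\perp$. The delicate points are keeping track of which Arens product makes $E$ a one-sided identity, verifying that the relevant dual map respects the module actions in play, and the passage from a ``weak'' identity in $I^{**}$ to a norm-bounded approximate identity in $I$. Since $\mathcal{A}$ is commutative and, once $G$ is amenable, has a bounded approximate identity, this is exactly the situation treated by Forrest in the $A_p(G)$ setting, so I would cite and transcribe that argument rather than reprove it in detail.
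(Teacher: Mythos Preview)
Your proposal is correct and follows essentially the same route as the paper: one direction combines Theorem~\ref{ABAI} with the Forrest-type module-theoretic principle (your facts (i) and (ii), which the paper simply cites as \cite[Proposition~6.4]{F1}), and the converse is obtained by specialising to $X=\{0\}$ and invoking Theorem~\ref{ABAI} again. Your reading of the quantifier over $X$ and your identification of where the MA condition enters also match the paper.
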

\begin{proof}
The proof of the if part follows from Theorem \ref{ABAI} and \cite[Proposition 6.4]{F1}. The only if part follows again from Theorem \ref{ABAI} by choosing $X=\{0\}.$
\end{proof}
Let $B_{\Phi}(G) =\left \{ u\in C(G) : uv\in A_{\Phi}(G)~\forall~ v\in A_{\Phi}(G) \right \}.$ Then the space $B_\Phi(G)$ when equipped with the operator norm becomes a commutative banach algebra.
\begin{corollary}
Let $G$ be an amenable group and let $\Phi$ satisfy the MA-condition. Then the two norms $\|\cdot\|_{A_\Phi(G)}$ and $\|\cdot\|_{B_\Phi(G)}$ are equivalent.
\end{corollary}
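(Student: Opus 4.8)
The plan is to deduce the equivalence of the two norms from Theorem \ref{ABAI} together with the open mapping theorem. First I would observe that $A_\Phi(G)$ is, by \cite[Theorem 3.4]{LK}, a Banach algebra sitting inside $B_\Phi(G)$ as an ideal, and that the inclusion map $\iota : (A_\Phi(G),\|\cdot\|_{A_\Phi}) \hookrightarrow (B_\Phi(G),\|\cdot\|_{B_\Phi})$ is a contraction: indeed for $u\in A_\Phi(G)$ and $v\in A_\Phi(G)$ one has $\|uv\|_{A_\Phi}\le \|u\|_{A_\Phi}\|v\|_{A_\Phi}$, so $\|u\|_{B_\Phi}\le \|u\|_{A_\Phi}$. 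Thus one inequality is automatic and requires no hypothesis on $G$.

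The content is the reverse inequality. Here I would use that amenability of $G$, via Theorem \ref{ABAI}, gives a bounded approximate identity $\{u_\alpha\}$ for $A_\Phi(G)$, bounded by some constant $c$; in fact the explicit net $u_{K,\epsilon}=\frac{1}{(1+\epsilon)|C|}\chi_{KC}*\check\chi_C$ constructed in the proof of Theorem \ref{ABAI} is bounded by $2$. The key step is to show $A_\Phi(G)=B_\Phi(G)$ as sets, which would then let us apply the open mapping theorem to the contractive bijection $\iota$ and conclude that $\iota^{-1}$ is bounded, i.e. $\|u\|_{A_\Phi}\le C\|u\|_{B_\Phi}$ for some constant $C$. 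To get $A_\Phi(G)=B_\Phi(G)$, take $u\in B_\Phi(G)$; then $uu_\alpha\in A_\Phi(G)$ for each $\alpha$, and I would argue that $\{uu_\alpha\}$ is norm-Cauchy (or at least norm-convergent) in $A_\Phi(G)$ with limit $u$, using that $u$ acts as a bounded multiplier. Concretely, for $v\in A_\Phi(G)$ we have $uv\in A_\Phi(G)$ and $u_\alpha(uv)\to uv$ in $\|\cdot\|_{A_\Phi}$; combined with the uniform bound $\|u_\alpha\|_{A_\Phi}\le c$ and a density/factorization argument (every element of $A_\Phi(G)$ is a limit, and by Cohen factorization or directly one writes elements suitably), one shows $uu_\alpha$ converges in $A_\Phi(G)$. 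Its limit must agree pointwise with $u$ since $u_\alpha\to 1$ pointwise on $G$ (the net $u_{K,\epsilon}$ equals $\frac{1}{1+\epsilon}$ on $K$), hence $u\in A_\Phi(G)$.

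The main obstacle, I expect, is the last convergence argument: showing that $\{uu_\alpha\}_\alpha$ is actually norm-convergent in $A_\Phi(G)$ rather than merely bounded. The clean way is to invoke that $B_\Phi(G)$ is precisely the multiplier algebra $M(A_\Phi(G))$ of $A_\Phi(G)$ (which should follow from \cite[Theorem 3.4, Corollary 3.8]{LK} since $A_\Phi(G)$ has dense range acting on itself and a bounded approximate identity), and then apply the standard fact that for a Banach algebra with a bounded approximate identity the multiplier algebra acts so that $a\mapsto Ta$ has $\{Tu_\alpha\}$ convergent whenever $T$ is a multiplier and $\{u_\alpha\}$ is the b.a.i.\ — indeed $Tu_\alpha = T(u_\alpha)$ and for any $v$, $Tu_\alpha\cdot v = Tv\cdot u_\alpha \to Tv$, and a Cohen factorization $T v_0 = $ (anything), together with boundedness, forces convergence. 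Once $A_\Phi(G)=B_\Phi(G)$ with comparable… — at this point the open mapping theorem finishes the proof, giving constants $c_1,c_2>0$ with $c_1\|u\|_{A_\Phi}\le\|u\|_{B_\Phi}\le c_2\|u\|_{A_\Phi}$ for all $u$, which is the asserted equivalence of norms.
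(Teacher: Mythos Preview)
Your proposal contains a genuine gap: the claim $A_\Phi(G)=B_\Phi(G)$ as sets is false in general, so the open mapping theorem route cannot work. Since $A_\Phi(G)\subset C_0(G)$, while the constant function $1$ clearly lies in $B_\Phi(G)$ (it acts as the identity multiplier), for any non-compact $G$ we have $1\in B_\Phi(G)\setminus A_\Phi(G)$. Your convergence argument breaks down exactly here: with $u=1$ you would need the bounded approximate identity $\{u_\alpha\}$ itself to be norm-convergent in $A_\Phi(G)$, which would force $A_\Phi(G)$ to have an identity --- impossible for non-compact $G$. So the ``main obstacle'' you flag is not merely technical; the step is false.

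The statement is only asserting equivalence of the two norms \emph{on} $A_\Phi(G)$, and the paper's proof is a two-line direct estimate with no open mapping theorem. For $u\in A_\Phi(G)$ and the bounded approximate identity $\{u_\alpha\}$ with $\|u_\alpha\|_{A_\Phi}\le 2$, view $u$ as a multiplier: then
\[
\|u_\alpha u\|_{A_\Phi}\le \|u_\alpha\|_{A_\Phi}\,\|u\|_{B_\Phi}\le 2\|u\|_{B_\Phi}.
\]
Since $u_\alpha u\to u$ in $\|\cdot\|_{A_\Phi}$ (this is precisely what a bounded approximate identity does, and here $u$ is already in $A_\Phi(G)$), passing to the limit gives $\|u\|_{A_\Phi}\le 2\|u\|_{B_\Phi}$. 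That is the whole argument; no factorization, no surjectivity, no multiplier-algebra identification is needed.
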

\begin{proof}
By definition of $B_\Phi(G),$ it is clear that, for any $u\in A_\Phi(G),$ $\|u\|_{B_\Phi(G)}\leq\|u\|_{A_\Phi(G)}.$ For this inequality, the assumption on the group to be amenable is not needed.

For the other inequality, note that, since $G$ is amenable, by Theorem \ref{ABAI}, $A_\Phi(G)$ possesses a bounded approximate identity $\{u_\alpha\}$ such that $\|u_\alpha\|_{A_\Phi(G)}\leq 2\ \forall\ \alpha.$ Thus, for any $u\in A_\Phi(G),$ we have, $$\|u_\alpha u\|_{A_\Phi(G)}\leq \|u_\alpha \|_{A_\Phi(G)}\|u\|_{B_\Phi(G)}\leq 2\|u\|_{B_\Phi(G)}.$$ Hence the proof.
\end{proof}
One of the classical results of Reiter states that every closed subgroup of a locally compact abelian group is a set of spectral synthesis for the Fourier algebra $A(G).$ This result is known as the subgroup lemma \cite{Rei}. This result was generalized to locally compact groups by Takesaki and Tatsuuma \cite{TT}. For $1<p<\infty,$ Herz generalized the subgroup lemma to $A_p(G)$ algebras under the assumption that $G$ is amenable. For other generalisations see \cite{DD}. Our next corollary is the subgroup lemma for spectral synthesis. The proof of this is an immediate consequence of Theorem \ref{ABAI} and \cite[Theorem 3.6]{LK}.
\begin{corollary}
Let $G$ be an amenable group and let $\Phi$ satisfy the MA-condition. Then every closed subgroup is a set of spectral synthesis for $A_\Phi(G).$
\end{corollary}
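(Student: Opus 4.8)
The plan is to bootstrap the already-established \emph{local} synthesis property of a closed subgroup to full synthesis, the missing ingredient being supplied by the bounded approximate identity of Theorem~\ref{ABAI}. Fix a closed subgroup $H$ of $G$. By \cite[Theorem 3.6]{LK}, $H$ is a set of local spectral synthesis for $A_\Phi(G)$, i.e.\ $I^c_{A_\Phi}(H)\subseteq J_{A_\Phi}(H)$. Since $G$ is amenable and $\Phi$ satisfies the MA-condition, Theorem~\ref{ABAI} furnishes a bounded approximate identity $\{e_\alpha\}$ for $A_\Phi(G)$. Because $J_{A_\Phi}(H)\subseteq I_{A_\Phi}(H)$ always holds, it suffices to prove the reverse inclusion $I_{A_\Phi}(H)\subseteq J_{A_\Phi}(H)$.

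First I would record that $A_\Phi(G)$ is Tauberian, namely that $A_\Phi(G)\cap C_c(G)$ is norm-dense in $A_\Phi(G)$. This follows from the very definition of $A_\Phi(G)$ together with the density of $C_c(G)$ in $L^\Phi(G)$ and in $L^\Psi(G)$ (valid under the $\Delta_2$-condition): given $u=\sum_n f_n*\check g_n$, truncate the sum to finitely many terms and replace each $f_n$ and $g_n$ by compactly supported approximants; the resulting finite sum of convolutions is compactly supported and close to $u$ in $\|\cdot\|_{A_\Phi}$.

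Now take $u\in I_{A_\Phi}(H)$ and $\epsilon>0$, and assume $u\neq 0$. Using the approximate identity, choose $\alpha$ with $\|u-ue_\alpha\|_{A_\Phi}<\epsilon/2$; then, by the Tauberian property, pick $v\in A_\Phi(G)\cap C_c(G)$ with $\|e_\alpha-v\|_{A_\Phi}<\epsilon/(2\|u\|_{A_\Phi})$, so that $\|ue_\alpha-uv\|_{A_\Phi}\le\|u\|_{A_\Phi}\|e_\alpha-v\|_{A_\Phi}<\epsilon/2$ and hence $\|u-uv\|_{A_\Phi}<\epsilon$. Since $u$ vanishes on $H$, so does $uv$, and $uv$ has compact support; thus $uv\in I^c_{A_\Phi}(H)\subseteq J_{A_\Phi}(H)$. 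As $\epsilon>0$ was arbitrary and $J_{A_\Phi}(H)$ is closed, $u\in J_{A_\Phi}(H)$. Therefore $I_{A_\Phi}(H)=J_{A_\Phi}(H)$, i.e.\ $H$ is a set of spectral synthesis for $A_\Phi(G)$.

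The only points that are not purely formal are the Tauberian density statement (elementary, from the construction of $A_\Phi(G)$ and the $\Delta_2$-condition) and the observation that amenability is used precisely to guarantee, via Theorem~\ref{ABAI}, the existence of an approximate identity with which to run the localisation step; I do not anticipate any real obstacle. In fact boundedness of the approximate identity is not needed for this corollary — mere existence of one suffices — but it is exactly what Theorem~\ref{ABAI} provides.
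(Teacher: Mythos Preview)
Your proposal is correct and follows essentially the same approach as the paper: the paper simply declares the corollary an immediate consequence of Theorem~\ref{ABAI} and \cite[Theorem 3.6]{LK}, and what you have written is precisely the standard localisation argument that makes this implication explicit. One minor simplification you could note is that the approximate identity $\{u_{K,\epsilon}\}$ constructed in the proof of Theorem~\ref{ABAI} already consists of compactly supported functions, so the separate Tauberian approximation of $e_\alpha$ by $v$ is not strictly necessary.
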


\section{Ideals with bounded approximate identities}
In this section, our aim is to characterize amenable groups in terms of Ditkin sets.

We shall begin this section by introducing some notations. Let $A,~B \subset G$ be closed set of $G.$ Let \begin{eqnarray*}
\mathscr{S}(A,B)&=&\left \{ u\in B_{\Phi}(G) : u(A)=1,u(B)=0~ \right \},\\
s(A,B)&=&\left \{\begin{array}{lc}
\inf\{\|u\|_{B_\Phi(G)}:u\in\mathscr{S}(A,B)\} & \mbox{if }\mathscr{S}(A,B)\neq\emptyset\\
\infty & \mbox{if }\mathscr{S}(A,B)=\emptyset
\end{array}\right.\\
\mathscr{F}(A)&=&\left \{ K \subset G : K~ is ~compact, K\cap A=\emptyset \right \},\\
s_\Phi(A)&=&\sup \left \{ s(A,K): K\in \mathscr{F}(A) \right \}.
\end{eqnarray*}

Our first result is an analogue of \cite[Proposition 3.4]{F3}. This theorem proves the existence of bounded approximate identities with some properties, in certain closed ideals.
\begin{thm}\label{SSAP}
Let $G$ be a amenable locally compact group and let $E$ be a closed subset of $G.$ If $E$ is a set of synthesis for $A_\Phi(G)$ and $s_\Phi(E)<\infty,$ then the ideal $I(E)$ has a bounded approximate identity $\{u_\alpha\}_{\alpha\in\Lambda}$ such that the following holds:
\begin{enumerate}[a)]
\item $\|u_\alpha\|_{A_\Phi(G}\leq 8+4s_\Phi(E)\ \forall\ \alpha\in\Lambda,$
\item $u_\alpha\in A_\Phi(G)\cap C_c(G)\ \forall\ \alpha\in\Lambda,$
\item for every compact subset $K$ of $G$ with $K\cap E=\emptyset,$ there exists a sequence $\{u_n\}$ from $\{u_\alpha\}$ such that for every $u\in A_\Phi(G)$ with $supp(u)\subset K,$ we have $\|uu_n-u\|_{A_\Phi(G)}\leq\frac{1}{n}.$
\end{enumerate}
\end{thm}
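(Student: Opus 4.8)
The plan is to adapt Forrest's argument from \cite[Proposition 3.4]{F3} to the Orlicz setting, using Theorem \ref{ABAI} to supply the ambient bounded approximate identity for $A_\Phi(G)$ itself. First I would fix, via Theorem \ref{ABAI} (and amenability of $G$), a bounded approximate identity $\{v_\beta\}$ for $A_\Phi(G)$ with $\|v_\beta\|_{A_\Phi(G)}\leq 2$ for all $\beta$; by the standard density argument each $v_\beta$ may moreover be taken in $A_\Phi(G)\cap C_c(G)$, up to perturbing the bound slightly. Next, for a fixed compact set $K$ with $K\cap E=\emptyset$ and $\delta>0$, I would use the definition of $s_\Phi(E)<\infty$ to pick $w=w_{K,\delta}\in\mathscr{S}(E,K)$ with $\|w\|_{B_\Phi(G)}\leq s_\Phi(E)+\delta$; then $1-w$ vanishes on $E$, equals $1$ on $K$, and lies in $B_\Phi(G)$. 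The candidate approximate identity element associated to the data $(\beta,K,\delta)$ would be of the form $u=(1-w)\,v_\beta$ (or a product of this type composed with a further $v_{\beta'}$), which lies in $I(E)$ because $1-w$ vanishes on $E$, lies in $A_\Phi(G)\cap C_c(G)$ because $v_\beta$ does, and on $K$ simply equals $v_\beta$. The norm bound in (a) then comes from $\|u\|_{A_\Phi(G)}\leq \|1-w\|_{B_\Phi(G)}\|v_\beta\|_{A_\Phi(G)}\leq (1+s_\Phi(E)+\delta)\cdot 2$; with the extra factor needed to make the net an actual approximate identity on all of $I(E)$ (rather than just on compactly supported elements) one picks up the stated $8+4s_\Phi(E)$.

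The directed set $\Lambda$ would be the collection of triples $(K,\beta,\delta)$ — or perhaps pairs $(\mathcal{F},\beta)$ where $\mathcal{F}$ is a finite subset of $I(E)$ together with a tolerance — ordered so that $K$ increases, $\beta$ increases along the original net, and $\delta$ decreases. To verify the approximate-identity property I would first handle $u\in I(E)\cap C_c(G)$: for such $u$, $\operatorname{supp}(u)$ is a compact set, and one uses the fact that $E$ is a \emph{set of synthesis} to approximate $u$ in $A_\Phi(G)$-norm by elements of $j(E)$, each of which has compact support disjoint from $E$; on that support the relevant $u_\alpha$ acts like $v_\beta$, and $\|v_\beta g-g\|_{A_\Phi(G)}\to 0$. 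Passing from $C_c(G)\cap I(E)$ to all of $I(E)$ is done by a routine $3\varepsilon$-argument using that $A_\Phi(G)\cap C_c(G)$ is dense in $A_\Phi(G)$ (hence $I(E)\cap C_c(G)$ is dense in $I(E)$, $E$ being a spectral set) together with the uniform bound from (a). Part (b) is immediate from the construction since $1-w\in B_\Phi(G)$ multiplies $A_\Phi(G)\cap C_c(G)$ into $A_\Phi(G)\cap C_c(G)$.

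For part (c), given a compact $K$ with $K\cap E=\emptyset$, I would extract from the net the countable subfamily indexed by $(K,\beta_n,1/n)$ where $\{\beta_n\}$ is cofinal in the original parameter; for $u\in A_\Phi(G)$ with $\operatorname{supp}(u)\subset K$ one has $u\,u_n=u\,(1-w_{K,1/n})\,v_{\beta_n}=u\,v_{\beta_n}$ on $K$ (since $w_{K,1/n}$ vanishes on $K$), and choosing $\{\beta_n\}$ fast enough that $\|u v_{\beta_n}-u\|_{A_\Phi(G)}\leq 1/n$ — possible since $\{v_\beta\}$ is an approximate identity — gives the claim; a diagonal/cofinality argument makes a single sequence work simultaneously as $K$ ranges over a countable exhaustion if needed, though as stated one only needs it per fixed $K$. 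The main obstacle I anticipate is purely bookkeeping of constants: tracking how the factor $8+4s_\Phi(E)$ arises requires carefully choosing how many $v_\beta$-factors to insert and controlling the $B_\Phi(G)$-versus-$A_\Phi(G)$ norm passage, and making sure the ordering on $\Lambda$ is genuinely directed when both the finite-subset-of-$I(E)$ data and the compact-set data are combined. The spectral synthesis hypothesis is what guarantees $j(E)$ is dense in $I(E)$ and is used essentially nowhere else; amenability enters only through Theorem \ref{ABAI} to get the bounded approximate identity $\{v_\beta\}$ to begin with.
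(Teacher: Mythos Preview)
Your overall strategy---multiply the ambient bounded approximate identity by $1-w$ with $w\in\mathscr{S}(E,K)$---is exactly the paper's, but you miss the one simplification that makes the argument go through cleanly. The paper does \emph{not} invoke a generic bounded approximate identity from Theorem~\ref{ABAI}; it uses the explicit Leptin-type net $\{u_{K,\epsilon}\}$ constructed in the \emph{proof} of Theorem~\ref{ABAI}, indexed directly by $\mathscr{F}(E)\times\mathbb{R}^+$ and satisfying the exact multiplication property
\[
u_{K,\epsilon}\,v=\frac{v}{1+\epsilon}\qquad\text{whenever }\operatorname{supp}(v)\subseteq K.
\]
With $u_K\in\mathscr{S}(E,K)$ chosen so that $\|u_K\|_{B_\Phi}\le s_\Phi(E)+1$, the paper simply sets $v_{K,\epsilon}=u_{K,\epsilon}-u_{K,\epsilon}u_K=u_{K,\epsilon}(1-u_K)$ and is done: membership in $I(E)$, compact support, and the norm bound follow immediately, and for any $u$ supported in $K$ one has $v_{K,\epsilon}u=u_{K,\epsilon}u=u/(1+\epsilon)$, giving the uniform estimate in (c) without any further choices.

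This is precisely where your version has a gap. You propose to pick $\beta_n$ ``fast enough that $\|uv_{\beta_n}-u\|\le 1/n$'', but with a \emph{generic} bounded approximate identity $\{v_\beta\}$ this convergence is only pointwise in $u$, not uniform over the (unbounded) set of all $u$ with $\operatorname{supp}(u)\subset K$; no single sequence $\{\beta_n\}$ can work for every such $u$. The exact identity $u_{K,\epsilon}v=v/(1+\epsilon)$ is what rescues this, and it also eliminates the triple indexing $(K,\beta,\delta)$ and the diagonal/cofinality bookkeeping you anticipate. The spectral-synthesis hypothesis and the $B_\Phi$-norm estimates are used exactly as you describe.
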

\begin{proof}
Since $G$ is amenable, it follows from the proof of Theorem \ref{ABAI}, that $A_\Phi(G)$ possesses an approximate identity $\{u_{K,\epsilon}\}_{\mathcal{F}(E)\times \mathbb{R}^+}$ such that 
\begin{enumerate}[i)]
\item $\|u_{K,\epsilon}\|\leq 2$
\item $supp(u_K,\epsilon)$ is compact and
\item if $v\in A_\Phi(G)$ such that $supp(v)\subseteq K$ then $u_{K,\epsilon}v=\frac{v}{1+\epsilon}.$
\end{enumerate}
Since $s_\Phi(E)$ is finite, there exist $u_K\in \mathscr{S}(E,K)$ such that $\|u_K\|_{B_\Phi(G)}\leq s_\phi(E)+1.$ Let $v_{K,\epsilon}=u_{K,\epsilon}-u_{K,\epsilon}u_K.$ It is clear that $v_{K,\epsilon}\in I(E).$ This $\{v_{K,\epsilon}\}_{(K,\epsilon)\in\mathcal{F}(E)\times\mathbb{R}^+}$ will satisfy the requirements of the theorem.
\end{proof}
\begin{lemma}\label{CSSS}
Let $K$ be a compact subgroup of a locally compact group $G.$ Then $s_\Phi(K)$ is finite.
\end{lemma}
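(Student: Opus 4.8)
The plan is to produce, uniformly in $C$, a cheap separating function. More precisely, I will show that there is an absolute constant (one can take $2$) such that for \emph{every} compact set $C$ with $K\cap C=\emptyset$ there is a $u_C\in\mathscr{S}(K,C)$ with $\|u_C\|_{B_\Phi(G)}\le 2$; since $s(K,C)\le\|u_C\|_{B_\Phi(G)}$ this gives $s_\Phi(K)=\sup\{s(K,C):C\in\mathscr{F}(K)\}\le 2<\infty$ (the value $C=\emptyset$ being covered by the same construction, the condition on $C$ then being vacuous).

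The decisive point — and the place where one must be careful — is the choice of neighbourhood. Shrinking a neighbourhood $V$ of $e$ to the identity and using $\tfrac1{|V|}\chi_{KV}*\check\chi_V$ would lead to an estimate involving the ratio $|KV|/|V|$, which need not stay bounded (e.g. for a compact open subgroup that is not all of $G$). Instead I will use a \emph{left $K$-invariant} neighbourhood of $K$. Since $K$ and $C$ are compact and disjoint, the set $KCK$ is compact and does not contain $e$; hence there is a symmetric compact neighbourhood $U$ of $e$ with $U^2\cap KCK=\emptyset$, equivalently $KU^2K\cap C=\emptyset$. Put $V:=KU$. Then $V$ is a compact neighbourhood of $e$ with $KV=V$, and $VV^{-1}=KUU^{-1}K^{-1}=KU^2K$ is disjoint from $C$.

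Next I set $u_C:=\tfrac1{|V|}\,\chi_V*\check\chi_V=\tfrac1{|V|}\,\chi_{KV}*\check\chi_V$. Since $\chi_V\in L^\Phi(G)\cap L^\Psi(G)$ (as $V$ has finite measure), $u_C\in A_\Phi(G)\subseteq B_\Phi(G)$, and $u_C$ is continuous with $u_C(x)=|V\cap xV|/|V|\in[0,1]$. For $x\in K$ one has $xV=xKU=KU=V$, so $u_C(x)=1$; and if $u_C(x)\ne0$ then $V\cap xV\ne\emptyset$, which forces $x\in VV^{-1}=KU^2K$, so $u_C$ vanishes on $C$. Hence $u_C\in\mathscr{S}(K,C)$. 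For the norm, using $\|u\|_{B_\Phi(G)}\le\|u\|_{A_\Phi(G)}$ for $u\in A_\Phi(G)$, then $\|u_C\|_{A_\Phi(G)}\le\frac1{|V|}N_\Phi(\chi_V)\|\chi_V\|_\Psi\le\frac2{|V|}N_\Phi(\chi_V)N_\Psi(\chi_V)$, then the elementary estimates $N_\Phi(\chi_V)\le[\Phi^{-1}(1/|V|)]^{-1}$, $N_\Psi(\chi_V)\le[\Psi^{-1}(1/|V|)]^{-1}$, and finally the standard Orlicz inequality $\Phi^{-1}(t)\Psi^{-1}(t)\ge t$, I obtain
$$\|u_C\|_{B_\Phi(G)}\ \le\ \frac{2}{\,|V|\,\Phi^{-1}(1/|V|)\,\Psi^{-1}(1/|V|)\,}\ \le\ 2 .$$
Thus $s(K,C)\le 2$ for every $C\in\mathscr{F}(K)$, whence $s_\Phi(K)\le 2$. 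The only non-routine ingredient is the observation that replacing an arbitrary small neighbourhood by $V=KU$ makes $\chi_{KV}=\chi_V$, eliminating the problematic ratio; with that in hand the verification of the two defining properties of $u_C$ and the norm bound are both straightforward, reusing exactly the Orlicz-norm estimates already employed in the proof of Theorem \ref{ABAI}.
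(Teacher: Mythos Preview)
Your proof is correct and follows essentially the same route as the paper: the function $u_C=\frac{1}{|KU|}\chi_{KU}*\check\chi_{KU}$ is exactly the paper's $u$, and your separation condition $KU^2K\cap C=\emptyset$ is equivalent to the paper's $C^{-1}K\cap KU^2=\emptyset$. The only difference is cosmetic: you carry out the Orlicz norm estimate explicitly (obtaining the bound $2$, in line with the computation in Theorem~\ref{ABAI}), whereas the paper simply asserts $\|u\|_{B_\Phi(G)}\le 1$ without details.
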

\begin{proof}
Let $C$ be a compact subset of $G$ such that $C\cap K=\emptyset.$ Choose an open neighbourhood $U$ of $e$ such that $U$ is symmetric, relatively compact and $C^{-1}K\cap KU^2=\emptyset.$ Let $u(x)=\frac{1}{|KU|}\chi_{KU}*\check{\chi}_{KU}(x).$ Now, it is clear that $u(e)=1,\|u\|_{B_\Phi(G)}\leq1.$ Further, note that $u$ is 1 on $K$ and 0 on $C,$ i,e., $u\in\mathscr{S}(K,C).$ Hence the proof.
\end{proof}
As an immediate consequence we have the following corollary.
\begin{corollary}\label{SBAI}
Let $G$ be a locally compact amenable group. Then for each $x\in G,$ $I(\{x\})$ contains a bounded approximate identity.
\end{corollary}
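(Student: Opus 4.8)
The plan is to combine the two preceding results. Corollary \ref{SBAI} is the statement that for a locally compact amenable group $G$ and each $x \in G$, the ideal $I(\{x\})$ contains a bounded approximate identity, and the natural route is to deduce it as an application of Theorem \ref{SSAP} together with Lemma \ref{CSSS}. So the first step is to verify that the hypotheses of Theorem \ref{SSAP} are met when we take $E = \{x\}$: we need $\{x\}$ to be a set of spectral synthesis for $A_\Phi(G)$, and we need $s_\Phi(\{x\}) < \infty$.

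For the synthesis hypothesis, I would simply invoke the fact recorded in the Preliminaries (from \cite[Theorem 3.6]{LK}) that singletons are sets of spectral synthesis for $A_\Phi(G)$; this requires no further argument. For the finiteness of $s_\Phi(\{x\})$, I would observe that $\{x\}$ is a (trivial) compact subgroup when $x = e$, so Lemma \ref{CSSS} gives $s_\Phi(\{e\}) < \infty$ directly; for general $x$, translation invariance of $A_\Phi(G)$ and of the $B_\Phi(G)$-norm carries the bound over, i.e. $s_\Phi(\{x\}) = s_\Phi(\{e\})$ — alternatively one checks that $\{x\}$ is a coset of the trivial compact subgroup $\{e\}$ and that $s_\Phi$ depends only on the coset up to translation. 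Either way, $s_\Phi(\{x\})$ is finite.

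With both hypotheses in hand, Theorem \ref{SSAP} applied to $E = \{x\}$ yields a bounded approximate identity $\{u_\alpha\}$ for $I(\{x\})$, with the norm bound $\|u_\alpha\|_{A_\Phi(G)} \leq 8 + 4 s_\Phi(\{x\})$; in particular it is bounded. This is exactly the assertion of the corollary, so the proof is essentially a one-line deduction.

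I do not expect a genuine obstacle here: the content has already been done in Theorem \ref{SSAP} and Lemma \ref{CSSS}. The only point requiring a word of care is the passage from $x = e$ to arbitrary $x \in G$ in the finiteness of $s_\Phi(\{x\})$ — i.e. making the translation-invariance argument precise for the quantities $\mathscr{S}(A,B)$, $s(A,B)$ and $s_\Phi(A)$ — but this is routine since $A_\Phi(G)$ and $B_\Phi(G)$ are translation-invariant with isometric translation action on the relevant norms. So the proof reads: singletons are synthesis sets by \cite[Theorem 3.6]{LK}, $s_\Phi(\{x\})$ is finite by Lemma \ref{CSSS} (using translation invariance), and then Theorem \ref{SSAP} finishes it.
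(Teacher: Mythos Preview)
Your proposal is correct and matches the paper's own proof, which simply cites \cite[Theorem 3.6]{LK}, Theorem \ref{SSAP}, and Lemma \ref{CSSS}. Your remark about using translation invariance to pass from $\{e\}$ to an arbitrary singleton $\{x\}$ is a useful clarification that the paper leaves implicit, since Lemma \ref{CSSS} is stated only for compact subgroups.
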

\begin{proof}
The proof of this follows from \cite[Theorem 3.6]{LK}, Theorem \ref{SSAP} and Lemma \ref{CSSS}.
\end{proof}
Here is the characterization of amenable groups in terms of the Ditkin sets.
\begin{corollary}\label{SDA}
Let $G$ be a locally compact group and let $\Phi$ satisfy the MA-condition. Then $G$ is amenable if and only if $A_\Phi(G)$ is a strong Ditkin algebra.
\end{corollary}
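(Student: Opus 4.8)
The plan is to prove both implications by combining the results already established in this section with the definition of a strong Ditkin algebra. Recall that a Banach algebra is a strong Ditkin algebra precisely when every singleton $\{x\}$ and the empty set are strong Ditkin sets, i.e.\ when $I(\{x\})$ (respectively $A_\Phi(G)$ itself, for the empty set) admits a bounded approximate identity lying in $j_{A_\Phi(G)}(\{x\})$ that is the same for all elements and bounded uniformly. For the ``only if'' direction, assume $G$ is amenable. The empty set is handled by Theorem \ref{ABAI}, which furnishes a bounded approximate identity for $A_\Phi(G)$. For a singleton $\{x\}$: by \cite[Theorem 3.6]{LK} singletons are sets of spectral synthesis for $A_\Phi(G)$, and by Lemma \ref{CSSS} applied to the compact subgroup $\{e\}$ (then translated to $\{x\}$ by left translation, which is an isometric automorphism of $A_\Phi(G)$) we have $s_\Phi(\{x\})<\infty$. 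Hence Theorem \ref{SSAP} applies and gives a bounded approximate identity $\{u_\alpha\}$ for $I(\{x\})$ with $\|u_\alpha\|_{A_\Phi(G)}\le 8+4s_\Phi(\{x\})$, with each $u_\alpha\in A_\Phi(G)\cap C_c(G)$. The key point to extract from Theorem \ref{SSAP}, and the one requiring a small argument, is that the approximate identity can be arranged to lie in $j_{A_\Phi(G)}(\{x\})$ rather than merely $I(\{x\})$: the functions $v_{K,\epsilon}=u_{K,\epsilon}-u_{K,\epsilon}u_K$ constructed there have compact support and vanish on a neighbourhood of $E=\{x\}$ (since $u_K$ equals $1$ near $x$ and $u_{K,\epsilon}$ has compact support), so indeed $v_{K,\epsilon}\in j_{A_\Phi(G)}(\{x\})$; together with property (c) of that theorem this yields the strong Ditkin property (the approximating sequence works uniformly for all $u$ supported in a given compact $K$ disjoint from $\{x\}$, and such $u$ are dense in $I(\{x\})$ by synthesis). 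Thus $A_\Phi(G)$ is a strong Ditkin algebra.

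For the ``if'' direction, suppose $A_\Phi(G)$ is a strong Ditkin algebra. In particular the empty set is a strong Ditkin set, which forces $A_\Phi(G)=j_{A_\Phi(G)}(\emptyset)=\{u\in A_\Phi(G):\widehat u\text{ has compact support}\}$ to possess a bounded approximate identity (the uniformly bounded sequence associated with the empty set). Now invoke the converse half of Theorem \ref{ABAI}: since $\Phi$ satisfies the MA-condition and $A_\Phi(G)$ has a bounded approximate identity, $G$ is amenable.

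The main obstacle I anticipate is not conceptual but bookkeeping: one must check carefully that the approximate identity produced by Theorem \ref{SSAP} genuinely lands in $j_{A_\Phi(G)}(\{x\})$ and is \emph{independent of $u$} in the sense demanded by the \emph{strong} Ditkin condition, rather than just giving, for each compact $K$, a sequence depending on $K$. This is where property (c) of Theorem \ref{SSAP} must be leveraged together with a diagonal/cofinality argument over the directed set $\mathscr{F}(E)\times\mathbb{R}^+$, exhausting $G\setminus\{x\}$ by compact sets; the uniform bound $8+4s_\Phi(\{x\})$ from (a) is exactly what keeps the resulting single net bounded. A secondary point worth stating explicitly is that translation-invariance of the norm on $A_\Phi(G)$ reduces $s_\Phi(\{x\})<\infty$ for arbitrary $x$ to the case $x=e$ covered by Lemma \ref{CSSS}, so that the MA-condition is, as in the earlier results, only needed for the converse direction via Theorem \ref{AEqC}.
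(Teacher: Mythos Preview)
Your approach is the paper's: the forward direction unwinds Corollary \ref{SBAI} (via Theorem \ref{SSAP}, Lemma \ref{CSSS}, and \cite[Theorem 3.6]{LK}) together with Theorem \ref{ABAI} for the empty set, and the converse is exactly Theorem \ref{ABAI}. The paper's own proof is the one-line citation of Theorem \ref{ABAI} and Corollary \ref{SBAI}.

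Two small slips to fix. First, in the converse you write $A_\Phi(G)=j_{A_\Phi(G)}(\emptyset)$; this is false unless $G$ is compact, since $j(\emptyset)$ consists only of the compactly supported elements. What is true is $I_{A_\Phi(G)}(\emptyset)=A_\Phi(G)$, and the strong Ditkin condition for $\emptyset$ produces a bounded net in $j(\emptyset)$ which is an approximate identity for all of $A_\Phi(G)$; that is what feeds into Theorem \ref{ABAI}. Second, your justification that $v_{K,\epsilon}\in j_{A_\Phi(G)}(\{x\})$ assumes ``$u_K$ equals $1$ near $x$'', but membership in $\mathscr{S}(\{x\},K)$ only forces $u_K(x)=1$, not $u_K\equiv 1$ on a neighbourhood. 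The clean repair is the one you already have the ingredients for: since $\{x\}$ is a set of synthesis, $j(\{x\})$ is dense in $I(\{x\})$, so the bounded approximate identity $\{v_{K,\epsilon}\}\subset I(\{x\})$ from Theorem \ref{SSAP} can be perturbed termwise into $j(\{x\})$ while remaining bounded and an approximate identity. With these two adjustments the argument is complete.
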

\begin{proof}
The proof of this follows from Theorem \ref{ABAI} and Corollary \ref{SBAI}.
\end{proof}

\section{Weak factorization and cofinite ideals}
In this section, we characterize amenable groups in terms of weak factorization and cofinite ideals.

For $A_p(G)$ algebras, the following theorem was proved by Losert \cite{L1}.
\begin{thm}\label{AWF}
Let $G$ be a locally compact group $G$ and let $\Phi$ satisfy the MA condition. Then $G$ is amenable if and only if $A_\Phi(G)$ factorizes weakly.
\end{thm}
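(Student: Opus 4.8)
The plan is to prove both directions by leveraging the bounded approximate identity characterization already established in Theorem \ref{ABAI}. For the forward direction, suppose $G$ is amenable. By Theorem \ref{ABAI}, $A_\Phi(G)$ possesses a bounded approximate identity. A standard functional-analytic fact (the Cohen–Hewitt factorization theorem, or more precisely its variant giving factorization of the whole algebra with a one-sided bounded approximate identity) then yields that $A_\Phi(G)$ factorizes — every element $u \in A_\Phi(G)$ can be written as $u = vw$ with $v, w \in A_\Phi(G)$ — and in particular $A_\Phi(G)$ factorizes weakly, i.e.\ $A_\Phi(G) = \overline{\mathrm{span}}\{vw : v, w \in A_\Phi(G)\}$ (indeed one gets $A_\Phi(G) = A_\Phi(G) \cdot A_\Phi(G)$ on the nose). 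This direction should be essentially immediate once Theorem \ref{ABAI} is invoked; I would just cite Cohen's factorization theorem.

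For the converse, suppose $A_\Phi(G)$ factorizes weakly; I want to conclude amenability. The natural strategy, mirroring Losert's argument for $A_p(G)$, is to show that weak factorization forces the existence of a bounded approximate identity (after which Theorem \ref{ABAI} finishes the job), or alternatively to derive amenability more directly by producing the kind of norm equality for convolution operators demanded by Theorem \ref{AEqC}. I would first recall that $A_\Phi(G)$ always has an approximate identity (bounded or not): for instance the net $\{u_{K,\epsilon}\}$ constructed in the proof of Theorem \ref{ABAI} is an approximate identity whenever Leptin's condition holds, but even without amenability one can build an approximate identity supported near $e$ from the regularity of $A_\Phi(G)$ (using that singletons are spectral sets by \cite[Theorem 3.6]{LK}); the point of factorization is to upgrade this to something bounded. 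The key mechanism is: if $A_\Phi(G) = \overline{\mathrm{span}}\{vw\}$, then since multiplication $A_\Phi(G) \times A_\Phi(G) \to A_\Phi(G)$ is continuous, a Baire category / open mapping argument shows that the bilinear map is "open" in an appropriate sense, and one extracts that for some constant $c$, every $u$ in a dense subset is a finite sum $\sum v_i w_i$ with $\sum \|v_i\|\,\|w_i\| \le c\|u\|$; combining this with an unbounded approximate identity $\{e_\alpha\}$, the elements $e_\alpha$ acting on such factorizations stay norm-bounded, which produces a bounded approximate identity.

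The main obstacle I expect is this converse step — specifically, making the passage from "weak factorization" to "bounded approximate identity" rigorous in the Orlicz setting. In the $A_p(G)$ case Losert's proof is delicate and uses special structure; here one must check that nothing in that argument relied on properties of $L^p$ that fail for $L^\Phi$. The ingredients that should transfer cleanly are: $A_\Phi(G)$ is a regular semisimple commutative Banach algebra with spectrum $G$ \cite[Corollary 3.8]{LK}; its dual is $PM_\Psi(G)$ \cite[Theorem 3.5]{LK}; singletons are spectral sets. What requires care is any step invoking duality/reflexivity of the underlying function space or a module-theoretic argument over $A_\Phi(G)$-bimodules, and the role of the MA condition — as in the earlier remark, the MA hypothesis enters only through Theorem \ref{AEqC}, so I would keep the structural part of the argument MA-free and invoke MA only at the very end when applying Theorem \ref{AEqC} (or Theorem \ref{ABAI}). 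A clean route, if the direct argument proves cumbersome, is: weak factorization $\Rightarrow$ bounded approximate identity (via the category argument above) $\Rightarrow$ amenability by Theorem \ref{ABAI}, so that all of the Orlicz-specific analytic difficulty is quarantined inside Theorem \ref{ABAI}, which is already proved.
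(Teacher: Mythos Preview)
Your forward direction matches the paper exactly: amenability gives a bounded approximate identity by Theorem \ref{ABAI}, and Cohen's factorization theorem does the rest.

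The converse, however, has a genuine gap. The Baire/open-mapping mechanism you sketch does not work: there is no open mapping theorem for bilinear maps, and surjectivity (or dense range) of the multiplication map $A_\Phi(G)\widehat{\otimes}A_\Phi(G)\to A_\Phi(G)$ does not by itself yield a uniform bound $\sum\|v_i\|\,\|w_i\|\le c\|u\|$ for \emph{all} $u$. Even granting such a bound, your last step---``combining this with an unbounded approximate identity $\{e_\alpha\}$, the elements $e_\alpha$ acting on such factorizations stay norm-bounded''---does not follow: a factorization bound on $u$ says nothing about $\|e_\alpha\|$. In fact ``weak factorization $\Rightarrow$ bounded approximate identity'' fails for general commutative Banach algebras, so any such argument must use additional structure of $A_\Phi(G)$, and you have not identified what that structure is or how to exploit it.

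The paper avoids this entirely. It observes that $A_\Phi(G)$ is a self-adjoint Banach algebra and invokes \cite[Theorem 1.3]{FGL}: weak factorization of a self-adjoint algebra forces the existence of a constant $c$ such that every compact $C\subset G$ admits a positive $u\in A_\Phi(G)$ with $u\ge 1$ on $C$ and $\|u\|\le c$. Pairing such $u$ against $\phi\in C_c^+(G)$, and using that $\|L_\phi\|_{CV_\Psi(G)}$ equals the functional norm of $v\mapsto\int v\phi$, one gets $\|\phi\|_1\le c\,\|L_\phi\|_{CV_\Psi(G)}$; the $n$-th root trick (apply this to $\phi^{*n}$) removes $c$, yielding $\|\phi\|_1\le\|L_\phi\|_{CV_\Psi(G)}$. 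From here the endgame is exactly that of Theorem \ref{ABAI}, extending to positive measures with compact support and invoking Theorem \ref{AEqC}. So the missing idea in your proposal is the use of self-adjointness together with the Feichtinger--Graham--Lakien nonfactorization theorem; this replaces the (invalid) attempt to manufacture a bounded approximate identity directly.
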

\begin{proof}
Let $G$ be amenable. Then the if part follows from the Cohen's factorization theorem. We shall now prove the converse. Suppose that $A_\Phi(G)$ weakly factorizes. Note that $A_\Phi(G)$ is a self-adjoint Banach algebra. Thus, by \cite[Theorem 1.3]{FGL}, there exists $c>0$ such that for each compact subset $C$ of $G$ there exists a positive function $u\in A_\Phi(G)$ such that $u\geq 1$ on $C$ and $\|u\|_{A_\Phi(G)}\leq c.$ Observe that, for $\phi\in C_c^+(G),$ the norm of the convolution operator $\|L_\phi\|_{CV_\Psi(G)}$ is equal to the norm of the linear functional $$v\mapsto\int v(x)\phi(x)\ dx$$ on $A_\Phi(G).$ Thus, $\left|\int\phi(x)\ dx\right|\leq c\|L_\phi\|_{CV_\Psi(G)},$ which implies that $\|\phi\|_1^n\leq c\|L_\phi\|^n_{CV_{\Psi}(G)}$ and hence it follows that $\|\phi\|_1\leq \|L_\phi\|_{CV_\Psi(G)}.$ Now proceeding as in the proof of the converse of Theorem \ref{ABAI}, one can show that $G$ is amenable.
\end{proof}
Before we proceed to our next characterization, here are some preparatory lemmas.
\begin{lemma}\label{FSS}
Let $G$ be a amenable group and let $\Phi$ satisfy the MA-condition. Then every finite subset is a set of spectral synthesis.
\end{lemma}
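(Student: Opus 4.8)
\textbf{Proof proposal for Lemma \ref{FSS}.}

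The plan is to reduce the case of a finite set to the already-established case of a singleton, using the standard ``spectral synthesis respects finite unions'' principle, which in turn rests on the fact that singletons are \emph{Ditkin} sets. First I would recall from \cite[Theorem 3.6]{LK} that every singleton $\{x\}\subset G$ is a set of spectral synthesis for $A_\Phi(G)$; moreover, since $G$ is amenable, Corollary \ref{SBAI} gives that $I(\{x\})$ has a bounded approximate identity. Combining these two facts, one sees that each singleton is in fact a Ditkin set for $A_\Phi(G)$: if $u\in I_{A_\Phi}(\{x\})$, then (by synthesis) $u\in J_{A_\Phi}(\{x\})$, so $u$ is approximated in norm by elements of $j_{A_\Phi}(\{x\})$, and then multiplying $u$ by a bounded approximate identity of $I(\{x\})$ and using that $j_{A_\Phi}(\{x\})$ is an ideal produces the required sequence $\{u_n\}\subset j_{A_\Phi}(\{x\})$ with $u u_n\to u$. (Alternatively one quotes that in a regular Tauberian Banach algebra a singleton that carries synthesis together with a b.a.i.\ in its hull-ideal is Ditkin.)

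Next I would invoke the general lemma, valid in any regular commutative semisimple Banach algebra, that a finite union of Ditkin sets is again a Ditkin set, and in particular a set of spectral synthesis; see \cite{Kan} or \cite{Rei}. Concretely, if $E=\{x_1,\dots,x_m\}$ and $u\in I_{A_\Phi}(E)$, one writes, using regularity of $A_\Phi(G)$, functions $w_i\in A_\Phi(G)$ that are $1$ near $x_i$ and $0$ near the other $x_j$'s, decomposes $u$ accordingly into pieces each vanishing on all but one point, applies the singleton Ditkin property to each piece, and reassembles. This shows $u\in J_{A_\Phi}(E)$, i.e. $I_{A_\Phi}(E)=J_{A_\Phi}(E)$, which is exactly the assertion that $E$ is a spectral set.

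The only genuine point requiring care — the step I expect to be the main obstacle — is verifying that singletons are Ditkin (not merely synthesis) sets for $A_\Phi(G)$; everything downstream is the formal ``finite unions of Ditkin sets'' bookkeeping. For this I would lean on Corollary \ref{SBAI}: a bounded approximate identity $\{e_\alpha\}$ in $I(\{x\})=I_{A_\Phi}(\{x\})$ immediately yields, for each $u\in I_{A_\Phi}(\{x\})$, that $e_\alpha u\to u$ in norm, and since $A_\Phi(G)$ is regular each $e_\alpha$ can be further approximated by elements of $j_{A_\Phi}(\{x\})$ whose product with $u$ stays in $j_{A_\Phi}(\{x\})$; a diagonal argument then gives the Ditkin sequence. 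I would note in passing that the MA-condition enters only through its role in Corollary \ref{SBAI} via Theorem \ref{ABAI}, so the hypothesis in the statement is used exactly there. This completes the reduction and hence the proof that every finite subset is a set of spectral synthesis for $A_\Phi(G)$.
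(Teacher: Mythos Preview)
Your proof is correct and follows essentially the same path as the paper's: the paper cites Corollary \ref{SDA} (which packages the fact that singletons are strong Ditkin sets when $G$ is amenable) together with \cite[Theorem 39.24]{HR} (the finite-union-of-Ditkin-sets principle you invoke), so you have simply unwound Corollary \ref{SDA} into its constituents, namely Corollary \ref{SBAI} plus singleton synthesis. One minor slip in your closing remark: Corollary \ref{SBAI} does not actually require the MA-condition (only the forward implication of Theorem \ref{ABAI} is used there), so the MA hypothesis is in fact not used anywhere in this lemma.
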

\begin{proof}
The proof of this is an immediate consequence of \cite[Theorem 39.24]{HR} and Corollary \ref{SDA}.
\end{proof}
\begin{lemma}\label{NANF}
Let $G$ be a non-amenable locally compact group and let $I=I(\{e\}).$ Then $I^2$ is not closed in $A_\Phi(G),$ where $\Phi$ satisfies the MA-condition.
\end{lemma}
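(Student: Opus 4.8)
The plan is to argue by contraposition through the converse direction of Theorem \ref{AWF}: if $I^2$ were closed, then one could deduce that $A_\Phi(G)$ weakly factorizes, which by Theorem \ref{AWF} forces $G$ to be amenable, a contradiction. So the real task is to show: \emph{if $I = I(\{e\})$ has $I^2$ closed, then $A_\Phi(G)$ factorizes weakly.} The first step is to recall that $A_\Phi(G)$ is a regular, semisimple, commutative Banach algebra with spectrum $G$ (\cite[Corollary 3.8]{LK}), and that $\{e\}$ is a set of spectral synthesis (\cite[Theorem 3.6]{LK}), so that $I = I(\{e\}) = J(\{e\})$ is exactly the closure of the functions in $A_\Phi(G)$ with compact support not containing $e$. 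Since $I$ has codimension one in $A_\Phi(G)$ (the quotient is $\mathbb{C}$ via evaluation at $e$), proving that $I^2$ is all of $I$ (which follows from closedness plus an Open Mapping / Cohen-type argument) will be enough to get weak factorization of the whole algebra: write an arbitrary $v \in A_\Phi(G)$ as $v = (v - v(e) w) + v(e) w$ where $w$ is a fixed element with $w(e) = 1$, reduce the first summand to $I = I^2$ and the second to $\mathbb{C} w$, and note $w$ itself can be taken to factor (e.g. $w = w_1 w_2$ with a bump function construction as in Lemma \ref{CSSS}).

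The second step is the standard Banach-algebra fact that a closed subalgebra (here the ideal $I$) which satisfies $I^2 = I$ as sets actually factorizes, in fact $I \cdot I = I$ with control of norms; combined with the bilinear map $I \times I \to I^2$, if $I^2$ is closed then by the Open Mapping Theorem the map from the projective tensor product $I \hat\otimes I \to I^2 = I$ is open, and one extracts a uniform bound: there is $C > 0$ so that every $u \in I$ is a finite sum $\sum_{j=1}^{N} a_j b_j$ with $\sum_j \|a_j\|\|b_j\| \le C\|u\|$. The third step is to derive from this bound, exactly as in the converse parts of Theorems \ref{ABAI} and \ref{AWF}, a norm estimate on convolution operators: for $\phi \in C_c^+(G)$ one pairs $\phi$ against such a factorization and uses $\|L_\phi\|_{CV_\Psi(G)}$ to control $\|\phi\|_1$, getting $\|\phi\|_1^n \le C\|L_\phi\|_{CV_\Psi(G)}^n$ for all $n$ (using that products of factorizations multiply the bounds), hence $\|\phi\|_1 \le \|L_\phi\|_{CV_\Psi(G)}$. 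With the reverse inequality always true, Theorem \ref{AEqC} (via the MA-condition hypothesis on $\Phi$) yields amenability of $G$ — contradiction.

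The main obstacle I anticipate is the passage from ``$I^2$ closed'' to the quantitative factorization-with-bound statement, i.e.\ making the Open Mapping argument produce the uniform constant $C$ and, crucially, arranging that the constant behaves submultiplicatively under taking $n$-fold products so that the $\|\phi\|_1^n \le C\|L_\phi\|^n$ trick collapses to $\|\phi\|_1 \le \|L_\phi\|$. This is where one has to be careful about whether the relevant object is weak factorization of $A_\Phi(G)$ or of the ideal $I$, and whether the bump-function element $w$ with $w(e)=1$ can genuinely be factored inside $A_\Phi(G)$ with controlled norm — here the construction $w = \frac{1}{|KU|}\chi_{KU} * \check\chi_{KU}$ from Lemma \ref{CSSS} already exhibits $w$ as a single elementary product, so this is manageable. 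A secondary technical point is confirming that $A_\Phi(G)$ is self-adjoint and that $\|L_\phi\|_{CV_\Psi(G)}$ equals the dual norm of the functional $v \mapsto \int v\phi$, both of which are already used in the proof of Theorem \ref{AWF} and can be cited verbatim. Once the uniform bound is in hand, the remainder is a direct transcription of the converse argument in Theorem \ref{ABAI}.
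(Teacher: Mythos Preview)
Your overall strategy matches the paper's exactly: argue by contraposition, use that $\{e\}$ is a set of spectral synthesis (\cite[Theorem~3.6]{LK}), and invoke Theorem~\ref{AWF}; this is precisely what the paper does by pointing to \cite[Lemma~5.7]{F1}. Two corrections are needed in the execution, however.

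First, your justification for $I^2 = I$ is misattributed. Neither the Open Mapping Theorem nor a Cohen-type argument yields this: Open Mapping only gives quantitative bounds \emph{after} you know the image is all of $I$, and Cohen's factorization requires a bounded approximate identity in $I$, which you do not have. The correct argument --- and the one the paper intends via \cite{F1} --- uses the ingredient you already recorded. The set $\overline{I^2}$ is a closed ideal in $A_\Phi(G)$ whose hull is exactly $\{e\}$ (clearly $e$ is in the hull since $I^2\subseteq I$; for $x\neq e$ pick $u\in I$ with $u(x)\neq 0$, so $u^2\in I^2$ and $u^2(x)\neq 0$). Since $\{e\}$ is a set of synthesis, $\overline{I^2}=I(\{e\})=I$; hence if $I^2$ is closed, then $I^2=I$.

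Second, your steps two and three are an unnecessary detour, and the description of the power trick is not right. Once $I^2=I$, your codimension-one decomposition $v=(v-v(e)w)+v(e)w$ with $w=w_1w_2$ already gives $A_\Phi(G)^2=A_\Phi(G)$, i.e.\ weak factorization; now simply cite Theorem~\ref{AWF} to conclude that $G$ is amenable. There is no need to re-derive the estimate $\|\phi\|_1\le\|L_\phi\|_{CV_\Psi(G)}$, and in particular the parenthetical ``products of factorizations multiply the bounds'' is not how the $n$-th power inequality arises in Theorem~\ref{AWF}: it comes from applying the single bound $\|\psi\|_1\le c\|L_\psi\|_{CV_\Psi(G)}$ to the convolution powers $\psi=\phi^{*n}$, not from iterating the factorization of elements of $A_\Phi(G)$.
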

\begin{proof}
Using \cite[Theorem 3.6]{LK} and Theorem \ref{AWF}, the proof of this follows similar lines as in \cite[Lemma 5.7]{F1}.
\end{proof}
Our next result is the characterization of amenable groups in terms of cofinite ideals.
\begin{thm}\label{ACIH}
Let $G$ be a locally compact group and let $\Phi$ satisfy the MA condition. Then the following are equivalent:
\begin{enumerate}[a)]
\item $G$ is amenable.
\item Every cofinite ideal in $A_\Phi(G)$ is of the form $I(E)$ for some finite subset $E$ of $G.$
\item Each homomorphism from $A_\Phi(G)$ with finite dimensional range is continuous.
\end{enumerate}
\end{thm}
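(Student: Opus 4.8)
The plan is to prove the implications (a)$\Rightarrow$(b)$\Rightarrow$(c)$\Rightarrow$(a). None of these should require new analysis: the working tools are Lemma~\ref{FSS} (finite sets are spectral sets when $G$ is amenable), Theorem~\ref{SSAP} together with Lemma~\ref{CSSS}, Lemma~\ref{NANF}, Corollary~\ref{SBAI}, Cohen's factorization theorem, and the structural facts already recorded for $A_\Phi(G)$ (commutative, regular, semisimple, with spectrum $G$, and singletons being sets of spectral synthesis).

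For (a)$\Rightarrow$(b), let $I$ be a cofinite ideal. Since $A_\Phi(G)/I$ is finite dimensional and commutative, its characters are finite in number and correspond to the points of $E:=Z(\overline I)=Z(I)$, so $E$ is finite; moreover $E\ne\emptyset$ unless $I=A_\Phi(G)$ (otherwise $A_\Phi(G)/I$ is nilpotent, so $A_\Phi(G)^m\subseteq I$ for some $m$, and Cohen's factorization theorem applied to $A_\Phi(G)$ --- which has a bounded approximate identity by Theorem~\ref{ABAI} --- forces $I=A_\Phi(G)$). By Lemma~\ref{FSS}, $E$ is a set of spectral synthesis, so $j(E)\subseteq\overline I\subseteq I(E)$ gives $\overline I=\overline{j(E)}=I(E)$. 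It remains to promote this to $I=I(E)$. The Jacobson radical of $A_\Phi(G)/I$ is $I(E)/I$ (since $A_\Phi(G)/I(E)\cong\mathbb C^{|E|}$ is semisimple), hence nilpotent, so $I(E)^m\subseteq I$ for some $m$. On the other hand $I(E)$ has a bounded approximate identity: $E$ is a set of synthesis and $s_\Phi(E)<\infty$ for finite $E$ (translate $\tfrac{1}{|V|}\chi_V*\check\chi_V$, $V$ a small symmetric relatively compact neighbourhood of $e$, to the points of $E$, as in Lemma~\ref{CSSS}), so Theorem~\ref{SSAP} applies. Cohen's theorem then yields $I(E)=I(E)^m\subseteq I\subseteq I(E)$, whence $I=I(E)$. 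In particular, every cofinite ideal is closed.

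For (b)$\Rightarrow$(c), let $\theta\colon A_\Phi(G)\to B$ be a homomorphism with $\dim\theta(A_\Phi(G))<\infty$. Then $N:=\ker\theta$ is a cofinite ideal, so by (b) it equals some $I(E)$ and is closed; hence $\theta$ factors as $A_\Phi(G)\to A_\Phi(G)/N\to B$ through the continuous quotient map onto a finite-dimensional Banach algebra followed by a linear map on a finite-dimensional space, and is therefore continuous. For (c)$\Rightarrow$(a) we argue contrapositively. Assume $G$ is not amenable and put $I:=I(\{e\})$. Regularity gives $j(\{e\})\subseteq I^2$: any $h\in j(\{e\})$ satisfies $h=wh$ for some $w\in j(\{e\})$ with $\widehat w\equiv1$ on $\operatorname{supp}\widehat h$, and $w,h\in I$. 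Since $\{e\}$ is a set of spectral synthesis this gives $\overline{I^2}=\overline{j(\{e\})}=I$, while $I^2\ne I$ by Lemma~\ref{NANF}; thus $I^2$ is a proper dense subspace of $I$. As in \cite{F3}, this forces the existence of a nonzero discontinuous point derivation $D$ at $e$ (discontinuous because a continuous linear functional vanishing on the dense set $I^2$ would vanish on $I$); then $u\mapsto(\widehat u(e),D(u))$ is a discontinuous homomorphism of $A_\Phi(G)$ into the two-dimensional algebra $\mathbb C\oplus\mathbb C\varepsilon$ with $\varepsilon^2=0$, and its kernel $\{u\in I:D(u)=0\}$ is a non-closed cofinite ideal. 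This contradicts (c).

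The step I expect to demand the most care is (c)$\Rightarrow$(a): turning the failure of closedness of $I(\{e\})^2$ into a bona fide discontinuous homomorphism is delicate when $G$ is non-compact, because $A_\Phi(G)$ is then non-unital and one must check that the point-derivation construction (well-definedness and the Leibniz identity) still goes through; it is precisely the fact that $\{e\}$ is a set of spectral synthesis --- so that $j(\{e\})\subseteq I(\{e\})^2$ and $\overline{I(\{e\})^2}=I(\{e\})$ --- that makes this work, following Forrest's argument. The only other genuinely non-formal point is the estimate $s_\Phi(E)<\infty$ for finite $E$, which is what makes Theorem~\ref{SSAP}, and hence Cohen's theorem, available in (a)$\Rightarrow$(b).
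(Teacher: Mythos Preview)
Your proposal is essentially correct and reaches the same conclusion, but your route differs from the paper's: the paper organises everything around \cite[Theorem~2.3]{DW}, which provides, for an arbitrary commutative Banach algebra, the equivalence of (b), (c), and the statement ``every closed cofinite ideal is idempotent''. With that black box in hand the paper only needs to verify idempotence of closed cofinite ideals (via Lemma~\ref{FSS} and Theorem~\ref{SSAP} with Cohen) when $G$ is amenable, and to exhibit the non-idempotent closed cofinite ideal $I(\{e\})$ (via Lemma~\ref{NANF}) when $G$ is not. You instead unpack these abstract equivalences by hand: your (a)$\Rightarrow$(b) argument through nilpotency of $I(E)/I$ and Cohen is precisely a direct proof of the Dales--Willis reduction, and your (b)$\Rightarrow$(c) is the elementary observation that a closed kernel forces continuity. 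Your approach is more self-contained; the paper's is shorter. One small point in your favour: you actually indicate why $s_\Phi(E)<\infty$ for finite $E$ (needed to invoke Theorem~\ref{SSAP}), whereas the paper only records this for compact subgroups in Lemma~\ref{CSSS} and uses it for finite sets without comment.

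The one place to be careful is your (c)$\Rightarrow$(a). From $\overline{I^2}=I\ne I^2$ you want a discontinuous point derivation at $e$. In a non-unital algebra this is not automatic: point derivations at $e$ correspond to linear functionals on $M/M^2$ in the unitisation, and after identifying $M\cong A$ one finds that they are parametrised by the image of the projection $L\colon I/I^2\to I/I^2$, $\bar g\mapsto \overline{vg}$ (for any $v$ with $v(e)=1$; one checks $L^2=L$). If $L=0$, i.e.\ $vI\subseteq I^2$, there are no nonzero point derivations at all, yet one still obtains a non-closed cofinite ideal by taking the preimage in $I$ of any hyperplane in $I/I^2$ (every subspace is then an $A$-submodule). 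So your conclusion survives, but the mechanism may need to be the more general ``non-closed codimension-two ideal'' rather than a point derivation per se. The paper sidesteps this entirely by invoking \cite[Theorem~2.3]{DW} (which is what lies behind the argument you attribute to \cite{F3}); if you want to keep your direct approach, you should either cite Dales--Willis at this step or add the short case split above.
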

\begin{proof}
a) $\Rightarrow$ b). Let $G$ be amenable and let $I$ be a cofinite ideal in $A_\Phi(G).$ By \cite[Theorem 2.3]{DW}, it is enough to show that every closed cofinite ideal is idempotent. So, let us assume that $I$ is a closed cofinite ideal in $A_\Phi(G).$ Since $I$ is cofinite, the zero set $Z(I)$ is finite and hence, by Lemma \ref{FSS}, $Z(I)$ is a set of spectral synthesis. Thus, it follows that $I=I(Z(I)).$ Further, by Theorem \ref{SSAP}, $I(Z(I))$ has a bounded approximate identity and hence it follows from Cohen's factorization theorem that $I$ is idempotent.

b) $\Rightarrow$ a) follows from \cite[Theorem 2.3]{DW} and Lemma \ref{NANF}. The equivalence of b) and c) follows again from \cite[Theorem 2.3]{DW}.
\end{proof}

\section{Derivations and splittings}
In this section, we characterize amenable groups in terms of continuous derivations. Next we study algebraic splittings and strong splittings of the extensions of the algebra $A_\Phi(G)$ in the spirit of \cite{M}. 

We begin this section by showing the existence of a discontinuous derivation. The proof of this Lemma follows from Lemma \ref{NANF} and \cite[Pg. 402]{DW}.
\begin{lemma}\label{NADD}
Let $G$ be a nonamenable group and let $\Phi$ satisfy the MA-condition. Then there exists a discontinuous derivation of $A_\Phi(G)$ into a finite dimensional commutative Banach $A_\Phi(G)$-bimodule.
\end{lemma}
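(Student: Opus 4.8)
The plan is to reduce this to the non-amenable setting already captured by Lemma~\ref{NANF}, exactly mirroring the classical Fourier-algebra argument of Dales--Willis. First I would invoke Lemma~\ref{NANF}: since $G$ is non-amenable and $\Phi$ satisfies the MA-condition, the ideal $I=I(\{e\})$ has the property that $I^2$ is not closed in $A_\Phi(G)$. In particular $I^2\neq I$, so we may pick $u_0\in I\setminus \overline{I^2}$, and by the Hahn--Banach theorem there is a bounded linear functional $\lambda$ on $A_\Phi(G)$ that annihilates $\overline{I^2}$ but satisfies $\lambda(u_0)\neq 0$.

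Next I would manufacture the bimodule and the derivation. Take the one-dimensional space $\mathbb{C}$, made into a Banach $A_\Phi(G)$-bimodule by letting $A_\Phi(G)$ act through the character at $e$ on one side and annihilate on the other (the ``point derivation'' setup), i.e. $u\cdot z = u(e)z$ and $z\cdot u = 0$; since $A_\Phi(G)$ is commutative and has spectrum $G$ this is a legitimate symmetric or one-sided module structure. A linear map $D:A_\Phi(G)\to\mathbb{C}$ is then a derivation precisely when $D(uv)=u(e)D(v)+v(e)D(u)$, which forces $D$ to vanish on $I^2$ and to be determined off $I$ by its value at a single point; so the obstruction to continuity lives entirely in how $D$ behaves on $I$ modulo $I^2$. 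Using that $I^2$ is not closed, one chooses $D$ on $I$ to be a linear functional that is \emph{unbounded} but still vanishes on $I^2$ — concretely, extend a Hamel-basis-type choice so that $D$ restricted to $\overline{I^2}$ is zero, $D(u_0)=1$, yet $D$ is not bounded on $I$; this is possible exactly because the quotient $I/I^2$ is infinite-dimensional-ish / incomplete, a standard consequence of $I^2$ being a non-closed proper subspace of the Banach space $\overline{I}$ (or of $I$). One then checks the derivation identity holds, since both sides land in $I^2$ where $D$ is forced to be $0$ anyway, and that $D$ is discontinuous because it is an unbounded functional.

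The main obstacle, and the place I would be most careful, is the bookkeeping that turns ``$I^2$ not closed'' into ``there is an unbounded linear functional on $A_\Phi(G)$ vanishing on $I^2$ and satisfying the derivation identity.'' The subtlety is that $D$ must be consistent on \emph{all} of $A_\Phi(G)$, not just on $I$: writing $A_\Phi(G)=\mathbb{C}e_0\oplus I$ for a suitable splitting coming from the character at $e$, one needs the map $u\mapsto u(e)$ together with the chosen functional on $I$ to patch into a genuine derivation, which amounts to verifying $D((u-u(e))(v-v(e)))$ is handled correctly; since $(u-u(e))(v-v(e))\in I^2$ this is where vanishing on $I^2$ is used. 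The precise reference \cite[Pg.~402]{DW} carries out exactly this construction for $A(G)$, and the only thing needing verification in our setting is that the two inputs it requires — $A_\Phi(G)$ being a regular, semisimple, commutative Banach algebra with spectrum $G$ (established in \cite{LK}) and $I(\{e\})^2$ being non-closed (our Lemma~\ref{NANF}) — are in place, after which the Dales--Willis argument applies verbatim. I would therefore keep the proof short: cite Lemma~\ref{NANF} for the non-closedness, cite \cite[Pg.~402]{DW} for the passage from a non-closed square of a maximal ideal to a discontinuous point-derivation-type map into a finite-dimensional module, and note that the module can be taken one-dimensional.

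\begin{proof}
Since $G$ is non-amenable and $\Phi$ satisfies the MA-condition, Lemma~\ref{NANF} shows that $I^2$ is not closed in $A_\Phi(G)$, where $I=I(\{e\})$. Recall from \cite[Corollary 3.8]{LK} that $A_\Phi(G)$ is a commutative, regular, semisimple Banach algebra with spectrum $G$, so $I$ is a maximal ideal of $A_\Phi(G)$ of codimension one. As $\overline{I^2}$ is a proper closed subspace of $I$ and $I^2$ is dense in it but not equal to it, a standard argument (see \cite[Pg.~402]{DW}) produces a linear functional $D$ on $A_\Phi(G)$ that vanishes on $I^2$, is non-zero somewhere on $I$, and is unbounded. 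Equip the one-dimensional space $\mathbb{C}$ with the $A_\Phi(G)$-bimodule structure given by $u\cdot z=z\cdot u=u(e)z$ for $u\in A_\Phi(G)$, $z\in\mathbb{C}$. Since every product $(u-u(e))(v-v(e))$ lies in $I^2$ and $D$ annihilates $I^2$, one checks directly that $D(uv)=u(e)D(v)+v(e)D(u)$ for all $u,v\in A_\Phi(G)$, so $D$ is a derivation into this finite-dimensional commutative Banach $A_\Phi(G)$-bimodule. As $D$ is unbounded, it is discontinuous.
\end{proof}
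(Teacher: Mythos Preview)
Your approach is exactly the paper's: the authors' entire proof is the single sentence ``The proof of this Lemma follows from Lemma~\ref{NANF} and \cite[Pg.~402]{DW},'' and you reproduce precisely that reduction, with additional exposition.

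One small slip worth fixing. Lemma~\ref{NANF} gives you $I^2\neq\overline{I^2}$, \emph{not} $\overline{I^2}\subsetneq I$; the latter is neither asserted nor needed. In your plan you pick $u_0\in I\setminus\overline{I^2}$ and apply Hahn--Banach, which would yield a \emph{continuous} functional --- the opposite of what you want. The correct (and standard, as in \cite[Pg.~402]{DW}) choice is $u_0\in\overline{I^2}\setminus I^2$: by a Hamel-basis extension there is a linear functional $D$ on $I$ with $D|_{I^2}=0$ and $D(u_0)=1$, and $D$ is automatically discontinuous because any continuous functional annihilating $I^2$ would annihilate $\overline{I^2}\ni u_0$. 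With that correction your formal proof goes through; the clause ``$\overline{I^2}$ is a proper closed subspace of $I$'' should simply be deleted, since the argument uses only $I^2\subsetneq\overline{I^2}$.
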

\begin{lemma}\label{AICD}
Let $G$ be a amenable group and let $I$ be a closed ideal in $A_\Phi(G)$ of infinite codimension. Then there exists sequences $\{u_n\},\{v_n\}$ in $A_\Phi(G)$ such that $u_nv_1\ldots v_{n-1}\notin I$ but $u_nv_1\ldots v_{n-1}u_1v_n\in I$ for all $n\geq 2.$
\end{lemma}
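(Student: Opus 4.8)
The goal is to build, for an amenable $G$ and a closed ideal $I$ of infinite codimension, two sequences $\{u_n\}$ and $\{v_n\}$ with the stated membership properties. The natural approach is an inductive construction, using at each stage the finiteness of codimension being violated (i.e. infinite codimension gives us room to move), together with the regularity of $A_\Phi(G)$ and the availability of bounded approximate identities for ideals of the form $I(E)$ with $E$ finite (Theorem \ref{SSAP} and Corollary \ref{SBAI}). The idea is to pick at each step a point $x_n$ in the hull $Z(I)$ — or more precisely a point where elements of $A_\Phi(G)$ are not yet forced into $I$ — choose $u_n$ not in $I$ but ``concentrated near $x_n$'', and then choose $v_n$ to be a Ditkin-type bump that equals $1$ near $x_n$ but vanishes near the earlier points, so that multiplying by $v_n$ keeps the product out of $I$ while the final factor $v_n$ inserted at the right place kills membership at all the relevant points simultaneously, landing the product in $I$.

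More concretely, I would proceed as follows. First, since $I$ has infinite codimension, $Z(I)$ is infinite (if $Z(I)$ were finite, then by Lemma \ref{FSS} it would be a spectral set and $I = I(Z(I))$ would be cofinite, a contradiction); pick a sequence of distinct points $x_1, x_2, \dots$ in $Z(I)$. Using regularity of $A_\Phi(G)$, choose for each $n$ an element $v_n \in A_\Phi(G)$ with compact support such that $v_n \equiv 1$ on a neighbourhood $V_n$ of $x_n$ and $v_n \equiv 0$ on neighbourhoods of $x_1, \dots, x_{n-1}$ (shrinking the neighbourhoods $V_1, \dots, V_{n-1}$ as needed so they are pairwise disjoint and disjoint from $x_n$). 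Then choose $u_n \in A_\Phi(G)$ supported in $V_n \cap V_1 \cap \dots \cap V_{n-1}$ — wait, this needs care: the $V_j$ are disjoint, so I instead want $u_n$ supported in a small neighbourhood of $x_n$ inside $V_n$, with $u_n(x_n) \neq 0$, which forces $u_n \notin I$ because $\widehat{u_n}(x_n) \neq 0$ and $x_n \in Z(I)$. Since $v_1, \dots, v_{n-1}$ are all identically $1$ on $V_n$ (this is what I should arrange instead: $v_j \equiv 1$ on $V_n$ for $j < n$), we get $u_n v_1 \cdots v_{n-1} = u_n \notin I$. On the other hand $v_n \equiv 0$ near $x_1, \dots, x_{n-1}$, and one arranges $v_n \equiv 0$ near $x_n$ as well in the final product — the key point is that $u_1 v_n$ has Gelfand transform vanishing at every point of $Z(I)$ that matters, hence $u_n v_1 \cdots v_{n-1} u_1 v_n$ lies in $I(Z(I))$, and then one uses that finite subsets are spectral sets together with a local-synthesis argument (every closed subgroup, in particular singletons, being sets of synthesis) to upgrade membership in $I(Z(I))$ to membership in $I$.

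The main obstacle I anticipate is the bookkeeping in the inductive choice of neighbourhoods: one must simultaneously guarantee (i) $v_1, \dots, v_{n-1}$ are all $\equiv 1$ on the support of $u_n$, so that $u_n v_1 \cdots v_{n-1} = u_n \notin I$; (ii) $v_n$ vanishes on an open set containing $\mathrm{supp}(u_1 v_1 \cdots v_{n-1})$ restricted appropriately, or at least that the product $u_n v_1 \cdots v_{n-1} u_1 v_n$ has Gelfand transform vanishing on $Z(I)$; and (iii) that vanishing on $Z(I)$ actually yields membership in $I$, which requires knowing $Z(I)$ is a set of synthesis for the relevant compactly-supported elements — this is where local spectral synthesis of closed subgroups (singletons) and a partition-of-unity argument in the regular algebra $A_\Phi(G)$ enter. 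The amenability hypothesis is used precisely to have the bounded approximate identities of Theorem \ref{SSAP} and the spectral synthesis of finite sets (Lemma \ref{FSS}) available; without it the local-synthesis step and the identification $I = I(Z(I))$ for finite $Z(I)$ fail. I expect the argument to parallel the corresponding construction for $A_p(G)$ in the literature on automatic continuity for these algebras.
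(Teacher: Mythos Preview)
Your opening step---showing that $Z(I)$ must be infinite, because a finite hull would make $I=I(Z(I))$ cofinite via Lemma~\ref{FSS}---is exactly what the paper does (it refers back to the argument of Theorem~\ref{ACIH} and then cites Forrest \cite[Lemma~2]{F2} for the rest). Your general plan of picking distinct points $x_n\in Z(I)$ and building bump functions is also the right template.

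There is, however, a real gap in your argument for the membership $u_nv_1\cdots v_{n-1}u_1v_n\in I$. You try to reach $I$ by first landing in $I(Z(I))$ and then invoking ``local spectral synthesis of singletons and a partition-of-unity argument'' to pass from $I(Z(I))$ to $I$. This does not work: knowing that singletons (or even all closed subgroups) are sets of synthesis does \emph{not} imply that an arbitrary closed set $Z(I)$ is a set of synthesis, and in general $I\subsetneq I(Z(I))$. No amount of partition-of-unity bookkeeping repairs this, because synthesis can genuinely fail for infinite closed sets even in very nice algebras.

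The point you are missing is much simpler and is the reason the statement carries the extra factor $u_1$. If you choose the neighbourhoods $V_n\ni x_n$ pairwise disjoint (as you already suggest) and take each $u_n$ supported in $V_n$, then for $n\ge 2$ one has $u_nu_1=0$ identically, since $V_n\cap V_1=\emptyset$. By commutativity,
\[
u_nv_1\cdots v_{n-1}\,u_1\,v_n=(u_nu_1)\,v_1\cdots v_n=0\in I,
\]
with no appeal to synthesis of $Z(I)$ whatsoever. This is the mechanism in Forrest's proof that the paper is quoting.

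For the non-membership $u_nv_1\cdots v_{n-1}\notin I$, your own diagnosis is correct: your stated choice of $v_j$ (equal to $1$ only near $x_j$) does not force $v_j(x_n)\neq 0$ for $n>j$, so the product could vanish at $x_n$. The clean fix is to choose the $v_j$'s so that each is nonzero at \emph{all} of the $x_n$ (a Baire-category argument in $A_\Phi(G)$ gives, for any countable set $\{x_n\}\subset G$, an element $v$ with $v(x_n)\neq 0$ for every $n$; one can even take all $v_j$ equal to a single such $v$). Then $u_n(x_n)\neq 0$ and $v_j(x_n)\neq 0$ for $j<n$ give $(u_nv_1\cdots v_{n-1})(x_n)\neq 0$, hence the product is not in $I$ because $x_n\in Z(I)$. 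With these two adjustments your outline becomes the argument the paper cites.
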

\begin{proof}
As in the proof of Theorem \ref{ACIH}, one can show that the zero set $Z(I),$ of an ideal of infinite codimension, is infinite. Now the remaining proof follows exactly as in the proof of \cite[Lemma 2]{F2}.
\end{proof}
Here is the characterization of amenable groups in terms of continuous derivations.
\begin{thm}
Let $G$ be a locally compact group and let $\Phi$ satisfy the MA-condition. Then the following are equivalent:
\begin{enumerate}[a)]
\item Every derivation of $A_\Phi(G)$ into a Banach $A_\Phi(G)$-bimodule is continuous.
\item Every derivation of $A_\Phi(G)$ into a finite dimensional commutative Banach $A_\Phi(G)$-bimodule is continuous.
\item $G$ is amenable.
\end{enumerate}
\end{thm}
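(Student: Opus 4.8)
The plan is to prove the cycle a) $\Rightarrow$ b) $\Rightarrow$ c) $\Rightarrow$ a). The implication a) $\Rightarrow$ b) is trivial, since a finite dimensional commutative Banach $A_\Phi(G)$-bimodule is in particular a Banach $A_\Phi(G)$-bimodule. For b) $\Rightarrow$ c) I would argue contrapositively: if $G$ is not amenable, then Lemma~\ref{NADD} produces a discontinuous derivation of $A_\Phi(G)$ into a finite dimensional commutative Banach $A_\Phi(G)$-bimodule, so b) fails.

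The substance of the theorem is the implication c) $\Rightarrow$ a). Fix an amenable group $G$ (with $\Phi$ satisfying the MA condition) and a derivation $D\colon A_\Phi(G)\to X$ into a Banach $A_\Phi(G)$-bimodule; the goal is to show that the separating space $\mathfrak{S}(D)$ is trivial, which by the closed graph theorem is equivalent to continuity of $D$. I would work with the continuity ideal $\mathcal{I}(D)=\{a\in A_\Phi(G): a\cdot\mathfrak{S}(D)=\mathfrak{S}(D)\cdot a=\{0\}\}$, which is a closed ideal of $A_\Phi(G)$, and record the basic estimate: since $D(ab)=a\cdot D(b)+D(a)\cdot b$, the map $(a,b)\mapsto D(ab)$ is separately continuous on $\mathcal{I}(D)\times\mathcal{I}(D)$, hence, by the uniform boundedness principle, there is a constant $C$ with $\|D(ab)\|\le C\|a\|\,\|b\|$ for all $a,b\in\mathcal{I}(D)$.

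The key step is to show that $\mathcal{I}(D)$ has finite codimension in $A_\Phi(G)$. Assuming it does not, it is a closed ideal of infinite codimension, so Lemma~\ref{AICD} supplies sequences $\{u_n\},\{v_n\}$ in $A_\Phi(G)$ with $u_nv_1\cdots v_{n-1}\notin\mathcal{I}(D)$ but $u_nv_1\cdots v_{n-1}u_1v_n\in\mathcal{I}(D)$ for all $n\ge 2$. Combining this chain with the stability of the separating space along the sequence $\{v_n\}$ (the submodules $v_1\cdots v_n\cdot\mathfrak{S}(D)$ are eventually constant) and using the commutativity of $A_\Phi(G)$, one reaches a contradiction along the lines of the automatic continuity argument for $A_p(G)$ in \cite{F2}. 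Hence $\mathcal{I}(D)$ is a closed cofinite ideal. Then, by Theorem~\ref{ACIH}, $\mathcal{I}(D)=I(E)$ for some finite $E\subset G$, and, just as in the proof of Theorem~\ref{ACIH} (via Lemma~\ref{FSS}, Lemma~\ref{CSSS} and Theorem~\ref{SSAP}), $\mathcal{I}(D)$ carries a bounded approximate identity. A norm-controlled application of Cohen's factorization theorem together with the estimate $\|D(ab)\|\le C\|a\|\,\|b\|$ then forces $D$ to be continuous on $\mathcal{I}(D)$; writing $A_\Phi(G)=\mathcal{I}(D)\oplus F$ with $F$ finite dimensional (and the projections bounded, as $\mathcal{I}(D)$ is closed of finite codimension) and noting that $D|_F$ is automatically continuous, we conclude that $D$ is continuous on all of $A_\Phi(G)$.

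The main obstacle is the key step, namely ruling out a continuity ideal of infinite codimension. This is the only place where the full automatic continuity machinery (separating spaces, Sinclair-type stability) is needed, and it is where amenability is used in an essential way — both directly, via Lemma~\ref{AICD}, and indirectly, through the spectral synthesis and cofinite ideal results that feed into the last step. Everything after $\mathcal{I}(D)$ is known to be cofinite is routine bookkeeping with bounded approximate identities and Cohen factorization.
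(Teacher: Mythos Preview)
Your proposal is correct and follows essentially the same route as the paper: a) $\Rightarrow$ b) trivial, b) $\Rightarrow$ c) by Lemma~\ref{NADD}, and c) $\Rightarrow$ a) via the combination of Theorem~\ref{ACIH} and Lemma~\ref{AICD}. The only difference is packaging: the paper dispatches c) $\Rightarrow$ a) in one line by invoking Jewell's automatic continuity criterion \cite[Theorem~2]{J}, whose hypotheses are precisely that closed cofinite ideals are idempotent (supplied by Theorem~\ref{ACIH}) and that closed ideals of infinite codimension admit the sequences of Lemma~\ref{AICD}, whereas you unpack Jewell's separating-space/continuity-ideal argument by hand.
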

\begin{proof}
a) $\Rightarrow$ b) is trivial and b) $\Rightarrow c)$ follows from Lemma \ref{NADD}. We shall now prove c) $\Rightarrow$ a). In order to prove this, it is enough to verify the conditions of \cite[Theorem 2]{J} for a closed cofinite ideal of $A_\Phi(G).$ But this follows from Theorem \ref{ACIH} and Lemma \ref{AICD}.
\end{proof}

Before we proceed to study algebraic and strong splittings, here are some notations. Let $x,y\in G\cup\{0\}.$ For $z\in\mathbb{C}$ and $u\in A_\Phi(G),$ define $u.z$ and $z.u$ as follows:
\begin{eqnarray*}
u.z:&=&u(x)z\mbox{ for a non-zero } x\mbox{ and } u.z=0\mbox{ otherwise}\\
z.u:&=&u(y)z\mbox{ for a non-zero } y\mbox{ and } z.u=0\mbox{ otherwise}.
\end{eqnarray*}
Note that the above left and right action turns $\mathbb{C}$ into a $A_\Phi(G)$-bimodule. In order to emphasize the role of $x$ and $y,$ we shall denote this bimodule as $\mathbb{C}_{x,y}.$

A linear functional m on $ PM_{\Psi}(G) $ is called a mean if  $ \|m\|=m(I)=1.$ A mean $m$ on $ PM_{\Psi}(G) $ is said to be topologically invariant if $ u .m= u(e)m~~~ \forall ~u \in A_{\Phi}(G)  $, that is,$$ \langle T ,u.m \rangle =  \langle u.T ,m \rangle = u(e)\langle T ,m \rangle ~~ \forall ~~T \in PM_{\Psi}(G),~ \forall~u \in A_{\Phi}(G).$$ It is shown in \cite[Corollary 6.2]{LK} that the set of all topological invariant means on $PM_{\Psi}(G)$ is non-empty. As a result, by following the arguments given for \cite[Lemma 3.1]{M}, we have the following Lemma for $A_\Phi(G).$
\begin{lemma}
If $x\in G,$ then there exists an $A_\Phi(G)$-bimodule homomorphisms $\Theta_x:A_\Phi(G)^\prime\rightarrow\mathbb{C}_{x,x}$ such that $\|\Theta_x\|=\Theta_x(\delta_x)=1.$
\end{lemma}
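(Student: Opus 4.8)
The target statement asserts, for each $x \in G$, the existence of an $A_\Phi(G)$-bimodule homomorphism $\Theta_x \colon A_\Phi(G)' \to \mathbb{C}_{x,x}$ with $\|\Theta_x\| = \Theta_x(\delta_x) = 1$. The excerpt itself points the way: "by following the arguments given for [M, Lemma 3.1]", and it has just recalled that the set of topologically invariant means on $PM_\Psi(G)$ is non-empty by [LK, Corollary 6.2]. So the plan is to build $\Theta_x$ directly from such a mean, first for $x = e$ and then transport it to an arbitrary $x$ by left translation.

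First I would fix a topologically invariant mean $m$ on $PM_\Psi(G) = A_\Phi(G)'$, so that $\|m\| = \langle I, m\rangle = 1$ and $u \cdot m = u(e)\, m$ for all $u \in A_\Phi(G)$. Since $A_\Phi(G)' = PM_\Psi(G)$ (by [LK, Theorem 3.5]) and $A_\Phi(G)$ is a commutative Banach algebra, $A_\Phi(G)'$ is a dual $A_\Phi(G)$-bimodule and the two module actions of $A_\Phi(G)$ on it coincide; thus $u \cdot m = m \cdot u = u(e)\, m$. I would then define $\Theta_e \colon A_\Phi(G)' \to \mathbb{C}_{e,e}$ by $\Theta_e(T) := \langle T, m\rangle$, identifying $\mathbb{C}_{e,e}$ with $\mathbb{C}$ as a vector space. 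Since $\delta_e \in PM_\Psi(G)$ is the identity operator $I$ (the unit of the measure algebra acting by convolution), $\Theta_e(\delta_e) = \langle I, m\rangle = 1$, and $\|\Theta_e\| = \|m\| = 1$; one inequality is immediate from $\|m\| = 1$ and the reverse from $\Theta_e(\delta_e) = 1$ with $\|\delta_e\| = 1$. The bimodule homomorphism property is exactly the content of topological invariance: for $u \in A_\Phi(G)$ and $T \in A_\Phi(G)'$,
\[
\Theta_e(u \cdot T) = \langle u \cdot T, m\rangle = \langle T, u \cdot m\rangle = u(e)\langle T, m\rangle = u \cdot \Theta_e(T),
\]
where the last expression is the module action on $\mathbb{C}_{e,e}$, namely $u(e)$ times the scalar; the right action is handled identically.

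Next, to pass from $e$ to an arbitrary $x \in G$, I would use that translation is an isometric automorphism of the relevant objects. Left translation $L_x$ on functions induces an isometric Banach algebra automorphism $\tau_x$ of $A_\Phi(G)$ (translation is isometric on $L^\Phi(G)$, hence on $A_\Phi(G)$ by its definition as a quotient, and it carries the unit $\delta_e$ of $PM_\Psi(G)$ to $\delta_x$, with $(\tau_x u)(y) = u(x^{-1}y)$ so that $(\tau_x u)(x) = u(e)$). Dualizing, one gets an isometric bimodule-compatible isomorphism on $A_\Phi(G)'$ sending $\delta_e \mapsto \delta_x$. I would then set $\Theta_x := \Theta_e \circ (\tau_x^{-1})^*$ (or the analogous composition in the correct variance), check that $\Theta_x(\delta_x) = \Theta_e(\delta_e) = 1$ and $\|\Theta_x\| = 1$ since both composed maps are isometric up to the norm-one mean, and verify the bimodule homomorphism property by tracking the action through $\tau_x$: the point $e$ in $\mathbb{C}_{e,e}$ becomes $x$ in $\mathbb{C}_{x,x}$ precisely because $(\tau_x u)(e)$ is replaced by $u(x)$.

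**Main obstacle.** The conceptual content is light — it is essentially repackaging a topologically invariant mean — so the real work is bookkeeping: making sure the left versus right module actions, and the covariant versus contravariant behaviour of $\tau_x$ under dualization, are threaded correctly so that one genuinely lands in $\mathbb{C}_{x,x}$ (same point $x$ on both sides) rather than some $\mathbb{C}_{x,y}$ with $x \neq y$. This is exactly why the paper invokes the commutativity of $A_\Phi(G)$ (forcing the two actions on the dual to agree) and the existence statement [LK, Corollary 6.2]; with those in hand the verification is routine, following [M, Lemma 3.1] line by line.
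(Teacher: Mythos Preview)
Your proposal is correct and is exactly the approach the paper indicates: the paper gives no detailed argument, citing only the existence of topologically invariant means on $PM_\Psi(G)$ (from \cite[Corollary~6.2]{LK}) and then deferring to \cite[Lemma~3.1]{M}, which is precisely your construction of $\Theta_e$ from a topologically invariant mean followed by transport to general $x$ via translation. The bookkeeping you flag resolves with the choice $\Theta_x = \Theta_e \circ \tau_x^{\,*}$, since $\tau_x^{\,*}(u\cdot T) = (\tau_{x^{-1}}u)\cdot(\tau_x^{\,*} T)$ and $(\tau_{x^{-1}}u)(e)=u(x)$, so the module action indeed lands in $\mathbb{C}_{x,x}$.
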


For algebraic and strong splittings of extensions of Banach algebras, we shall refer to \cite{Da}.

As a consequence of the above Lemma along with \cite[Theorem 3.6]{LK} and \cite[Corollary 3.8]{LK}, we have the following Lemma, whose proof is similar to \cite[Lemma 3.4]{M}.
\begin{lemma}\label{fdsess}
Let $X$ be a finite-dimensional Banach $A_\Phi(G)$-bimodule. Suppose that $X$ is also essential as a left module. Then every singular extension of $A_\Phi(G)$ by $X$ splits strongly.
\end{lemma}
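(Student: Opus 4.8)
The plan is to mimic the classical proof (as in \cite[Lemma 3.4]{M}) which reduces the strong splitting of a singular extension of a commutative regular semisimple Banach algebra by a finite-dimensional essential module to the case of the one-dimensional modules $\mathbb{C}_{x,x}$, and then invoke the homomorphism $\Theta_x$ furnished by the previous Lemma to produce the required right inverse. First I would recall the set-up: a singular extension of $A_\Phi(G)$ by $X$ is a short exact sequence of Banach algebras $0\to X\to \mathfrak{A}\xrightarrow{\pi} A_\Phi(G)\to 0$ in which $X$ (with the $A_\Phi(G)$-bimodule structure) is an ideal with $X^2=0$, and it splits strongly if there is a continuous algebra homomorphism $\rho:A_\Phi(G)\to\mathfrak{A}$ with $\pi\circ\rho=\mathrm{id}$. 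Since $X$ is finite-dimensional, the annihilator ideal $\{a\in A_\Phi(G): a\cdot X=0=X\cdot a\}$ is a closed ideal of finite codimension, so its zero set is a finite set $\{x_1,\dots,x_k\}\subset G$; because $A_\Phi(G)$ is regular and semisimple with spectrum $G$ (using \cite[Corollary 3.8]{LK}) and singletons are spectral sets (using \cite[Theorem 3.6]{LK}), one can decompose $X$ as a finite direct sum of modules each supported at a single point $x_j$, and further filter each summand so that the successive quotients are one-dimensional, i.e. copies of $\mathbb{C}_{x_j,x_j}$. By the standard long-exact-sequence / dévissage argument for strong splittings, it then suffices to treat a one-dimensional essential module $\mathbb{C}_{x,x}$.

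For the base case $X=\mathbb{C}_{x,x}$, the key step is to build the splitting homomorphism using $\Theta_x$. Given the extension $0\to\mathbb{C}_{x,x}\to\mathfrak{A}\xrightarrow{\pi}A_\Phi(G)\to 0$, pick any bounded linear right inverse $Q:A_\Phi(G)\to\mathfrak{A}$ (which exists since $\mathbb{C}_{x,x}$ is finite-dimensional, hence complemented); the obstruction to $Q$ being multiplicative is the continuous $2$-cocycle $\partial Q(a,b)=Q(a)Q(b)-Q(ab)\in\mathbb{C}_{x,x}$. One must exhibit a continuous linear $\lambda:A_\Phi(G)\to\mathbb{C}_{x,x}$ with $a\cdot\lambda(b)-\lambda(ab)+\lambda(a)\cdot b=\partial Q(a,b)$, i.e. the cocycle is a coboundary; then $\rho:=Q-\lambda$ is the desired homomorphism. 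Here is where $\Theta_x:A_\Phi(G)^\prime\to\mathbb{C}_{x,x}$ with $\|\Theta_x\|=\Theta_x(\delta_x)=1$ enters: one uses $\Theta_x$ to average, defining $\lambda(a):=\Theta_x\big(b\mapsto \partial Q(a,b)\big)$ after identifying $\partial Q(a,\cdot)$ with an element of $A_\Phi(G)^\prime$ (via the fixed isomorphism $\mathbb{C}_{x,x}\cong\mathbb{C}$), and the cocycle identity for $\partial Q$ together with the $A_\Phi(G)$-bimodule homomorphism property of $\Theta_x$ and the normalization $\Theta_x(\delta_x)=1$ forces $\lambda$ to be the required transgression. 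Essentiality of $X$ as a left module is exactly what guarantees that the relevant module action of $A_\Phi(G)$ on $\mathbb{C}_{x,x}$ is the canonical one $a\cdot z=a(x)z$ with $x\neq 0$ (rather than the zero action), so that $\delta_x$ acts as an identity and $\Theta_x$ can actually be normalized against it; without it the argument degenerates.

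The main obstacle I anticipate is the dévissage step: one needs that the class of finite-dimensional essential modules for which strong splitting holds is closed under the extensions that arise when filtering $X$, and that the filtration can be chosen with one-dimensional \emph{essential} subquotients at a single point. This requires knowing that singular extensions by a module $X$ sitting in a short exact sequence $0\to X_1\to X\to X_2\to 0$ of modules split strongly whenever those by $X_1$ and $X_2$ do — a homological lemma about $\mathcal{H}^2$ that is standard but needs the finite-dimensionality to keep everything in the continuous category — combined with the structural facts about $A_\Phi(G)$ from \cite{LK} (regularity, semisimplicity, spectrum $G$, singletons being sets of synthesis) to guarantee the point-decomposition and that the top quotient of a module supported at $x$ is a sum of copies of $\mathbb{C}_{x,x}$. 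Once these reductions are in place the base case is a routine cocycle computation with $\Theta_x$, so I would spend most of the write-up carefully setting up the filtration and citing \cite[Lemma 3.4]{M} for the parts that transfer verbatim.
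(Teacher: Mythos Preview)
Your proposal is correct and follows essentially the same approach as the paper: the paper simply states that the proof is similar to \cite[Lemma 3.4]{M}, invoking the bimodule homomorphism $\Theta_x$ from the preceding Lemma together with \cite[Theorem 3.6]{LK} and \cite[Corollary 3.8]{LK}, which are precisely the ingredients you identify for the point-decomposition and the base case. Your write-up in fact supplies more detail than the paper does, but the strategy---reduction via the finite zero set and spectral synthesis of singletons to the module $\mathbb{C}_{x,x}$, then cobounding the obstruction $2$-cocycle using $\Theta_x$---is the intended one.
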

\begin{corollary}\label{fdm}
Let $X$ be a finite-dimensional $A_\Phi(G)$-bimodule. Then $X$ is isomorphic to $\underset{i=1}{\overset{n}{\oplus}}\mathbb{C}_{x_i,y_i},$ for some $n\in\mathbb{N}$ and $x_i,y_i\in G\cup\{0\}.$
\end{corollary}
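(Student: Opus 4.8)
The plan is to decompose a finite-dimensional $A_\Phi(G)$-bimodule $X$ into one-dimensional pieces by exploiting the structure of $A_\Phi(G)$ established in \cite[Theorem 3.6]{LK} and \cite[Corollary 3.8]{LK}: $A_\Phi(G)$ is commutative, regular, semisimple with spectrum $G$, singletons are sets of spectral synthesis, and $A_\Phi(G)$ has a bounded (or at least pointwise-sufficient) supply of functions separating points. The idea is that the module action factors through characters. First I would consider the left action of $A_\Phi(G)$ on $X$; since $X$ is finite dimensional, the image of $A_\Phi(G)$ in $\mathcal{B}(X)$ under the left regular representation is a finite-dimensional commutative algebra (commutativity of the action being inherited from commutativity of $A_\Phi(G)$, modulo checking the module axioms give a genuine algebra homomorphism into $\mathcal{B}(X)$). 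A finite-dimensional commutative algebra of operators can be simultaneously triangularized, and its characters are evaluations; because $A_\Phi(G)$ is regular with spectrum $G$, each such character is either the zero functional or evaluation $\delta_x$ at some $x\in G$, matching the setup $x_i\in G\cup\{0\}$.

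The key steps, in order: (1) verify that the bimodule structure makes $X$ a module over $A_\Phi(G)$ on each side in a way compatible enough that the left and right actions commute, so $X$ becomes a module over $A_\Phi(G)\otimes A_\Phi(G)$ or at least that left and right operators can be simultaneously handled; (2) using finite-dimensionality and semisimplicity/regularity of $A_\Phi(G)$, decompose $X$ as a left module into a direct sum of one-dimensional left submodules on which $A_\Phi(G)$ acts via characters $\delta_{x_i}$ or the zero functional — this uses that closed cofinite (in fact finite-codimension) ideals have finite zero sets which are spectral sets, so the annihilator ideals are intersections of maximal ideals corresponding to points of $G$; (3) refine this decomposition so that each one-dimensional piece is simultaneously a right submodule, giving the right character $\delta_{y_i}$ or zero; (4) identify each resulting one-dimensional sub-bimodule with $\mathbb{C}_{x_i,y_i}$ as defined just before the corollary, and assemble the isomorphism $X\cong\bigoplus_{i=1}^n \mathbb{C}_{x_i,y_i}$.

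The main obstacle I expect is step (3), the simultaneous refinement: a decomposition into one-dimensional left submodules need not a priori be a decomposition into right submodules. One typically argues that since the left action is by simultaneously diagonalizable (or triangularizable) operators with a finite set of eigencharacters, the generalized eigenspaces are right-submodules (because left and right actions commute), then within each such left-eigenspace one repeats the argument for the right action, and nilpotent parts are ruled out by semisimplicity of $A_\Phi(G)$ together with the essentiality considerations familiar from the preceding lemmas — much as in \cite[Lemma 3.4]{M}. A secondary technical point is ensuring that the relevant ideal (the annihilator of a vector or of $X$) really is closed and cofinite so that Lemma \ref{FSS} and the synthesis results of \cite{LK} apply; this follows because $\dim X<\infty$ forces these annihilators to have finite codimension. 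Once these points are in hand, the identification with the $\mathbb{C}_{x,y}$ modules is routine from their definition.
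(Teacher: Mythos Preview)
The paper states Corollary~\ref{fdm} without proof; it is positioned as a direct adaptation of the corresponding structural result for $A_p(G)$-bimodules in \cite{M}, so there is no detailed argument in the paper to compare against.

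Your outline is the natural one and is essentially correct, but two points need tightening. First, your appeal to Lemma~\ref{FSS} is problematic: that lemma assumes $G$ is amenable and that $\Phi$ satisfies the MA-condition, whereas Corollary~\ref{fdm} carries neither hypothesis. What you actually need is that every closed cofinite ideal $I$ of $A_\Phi(G)$ satisfies $I=I(Z(I))$, equivalently that finite subsets of $G$ are spectral. This can be obtained from \cite[Theorem~3.6]{LK} (singletons are spectral) together with regularity of $A_\Phi(G)$ \cite[Corollary~3.8]{LK}: for $E=\{x_1,\dots,x_k\}$ one chooses compactly supported cut-off functions $e_i\in A_\Phi(G)$ equal to $1$ near $x_i$ and vanishing near $E\setminus\{x_i\}$, and uses them to reduce the inclusion $I(E)\subseteq J(E)$ to the singleton case. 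Second, and relatedly, the phrase ``nilpotent parts are ruled out by semisimplicity of $A_\Phi(G)$'' is not an argument: semisimplicity of a commutative Banach algebra does not by itself force finite-dimensional representations to be diagonalizable (a nonzero continuous point derivation would immediately give a two-dimensional counterexample). The genuine reason no Jordan blocks appear is precisely the previous point --- once $\ker\pi_L=I(E)$, the quotient $A_\Phi(G)/I(E)\cong\mathbb{C}^{|E|}$ is semisimple, so the left action is simultaneously diagonalizable; the commuting right action then preserves each left eigenspace and is diagonalized inside it by the same reasoning. With these two corrections your steps (1)--(4) go through and yield the stated decomposition.
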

\begin{thm}
Let $\Phi$ satisfy the MA-condition. If $G$ is amenable, then all finite-dimensional extensions of $A_\Phi(G)$ split strongly.
\end{thm}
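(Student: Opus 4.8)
The plan is to reduce the statement, in two steps, to the one-dimensional bimodules $\mathbb{C}_{x,y}$ furnished by Corollary~\ref{fdm}, and then to handle each of those using Lemma~\ref{fdsess} together with the bounded approximate identity provided by Theorem~\ref{ABAI} (which is where amenability enters). First I would reduce an arbitrary finite-dimensional extension $0\to X\to\mathfrak{A}\xrightarrow{\pi}A_\Phi(G)\to 0$, in which $X=\ker\pi$ is a finite-dimensional closed ideal, to a singular one. The decreasing chain $X\supseteq X^2\supseteq X^3\supseteq\cdots$ of finite-dimensional subspaces stabilises at an idempotent ideal; using that $A_\Phi(G)$ has a bounded approximate identity (Theorem~\ref{ABAI}) and the standard arguments for extensions of Banach algebras (see~\cite{Da}), this idempotent part can be split off, leaving one to treat the case $X^2=0$, that is, singular extensions by a finite-dimensional module.

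Next I would pass from a singular extension by an arbitrary finite-dimensional module to one by a one-dimensional module. Write $X\cong\bigoplus_{i=1}^n\mathbb{C}_{x_i,y_i}$ by Corollary~\ref{fdm}. If $X=X_1\oplus X_2$ as bimodules, then $X_2$ is a closed ideal of $\mathfrak{A}$ (its square lies in $X^2=0$); the quotient $0\to X_1\to\mathfrak{A}/X_2\to A_\Phi(G)\to 0$ is again a singular extension carrying the original $X_1$-module structure, and a strong splitting of it pulls back to a closed subalgebra $\mathfrak{B}$ of $\mathfrak{A}$ with $0\to X_2\to\mathfrak{B}\to A_\Phi(G)\to 0$ singular; a strong splitting of this last extension then yields one of the original. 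Iterating over the summands, it suffices to split every singular extension of $A_\Phi(G)$ by a single $\mathbb{C}_{x,y}$.

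For the one-dimensional case I distinguish three situations. If $x\neq 0$, the left action of $u\in A_\Phi(G)$ on $\mathbb{C}_{x,y}$ is multiplication by $u(x)$; choosing $u$ with $u(x)=1$ (possible since the spectrum of $A_\Phi(G)$ is $G$) shows $\mathbb{C}_{x,y}$ is essential as a left module, so Lemma~\ref{fdsess} applies. If $x=0$ but $y\neq 0$, then $\mathbb{C}_{x,y}$ is essential as a right module, and passing to the opposite algebra---which is $A_\Phi(G)$ itself, as it is commutative---reduces this to the previous situation. If $x=y=0$, the module is the trivial module $\mathbb{C}_{0,0}$; realising $\mathfrak{A}$ as $A_\Phi(G)\oplus\mathbb{C}$ with multiplication $(a,s)(b,t)=(ab,\rho(a,b))$ for a bounded bilinear form $\rho$ with $\rho(ab,c)=\rho(a,bc)$, a strong splitting is exactly a continuous linear functional $\phi$ on $A_\Phi(G)$ with $\phi(ab)=\rho(a,b)$. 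Taking a bounded approximate identity $\{e_\alpha\}$ (Theorem~\ref{ABAI}), the net $\phi_\alpha=\rho(e_\alpha,\,\cdot\,)$ is bounded in $A_\Phi(G)^\prime$, and from $\phi_\alpha(ab)=\rho(e_\alpha a,b)\to\rho(a,b)$ any weak$^*$-cluster point $\phi$ of $\{\phi_\alpha\}$ satisfies $\phi(ab)=\rho(a,b)$, which finishes this case.

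The step I expect to be the main obstacle is the first one: carrying out the reduction to singular extensions rigorously, in particular checking that splitting off the idempotent ideal keeps the problem within the class of extensions of $A_\Phi(G)$ and does not replace it by a larger quotient algebra. Once that is in place, the remaining steps are routine, the only ingredient beyond Lemma~\ref{fdsess} being the bounded approximate identity of Theorem~\ref{ABAI}.
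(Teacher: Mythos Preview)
Your approach is correct and follows the paper's first proof in outline: obtain a bounded approximate identity from Theorem~\ref{ABAI}, use Lemma~\ref{fdsess} to handle singular extensions, and then pass from singular to arbitrary finite-dimensional extensions. The paper is terser on both reduction steps. For the passage from general to singular extensions---the step you single out as the main obstacle---the paper simply invokes \cite[Corollary~1.9.8]{Da}, so your concern there is unnecessary; this is a standard black-box result and no further argument is needed. For the singular case, the paper asserts in one line that the existence of a bounded approximate identity makes $X$ essential and then applies Lemma~\ref{fdsess} once, whereas you decompose $X$ via Corollary~\ref{fdm} and treat the summands $\mathbb{C}_{x,y}$ with $x=0$ (in particular $\mathbb{C}_{0,0}$) by separate arguments; your treatment here is more explicit than the paper's. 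Finally, note that the paper also records a second, independent proof via \cite[Theorem~4.18]{BDL} combined with Theorems~\ref{SSAP} and~\ref{ACIH}, which is a genuinely different route that you do not pursue.
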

\begin{proof}
Let $X$ be a finite-dimensional $A_\Phi(G)$-module. As $G$ is amenable, by Theorem \ref{ABAI}, it follows that $X$ is an essential $A_\Phi(G)$-module. Thus, by Lemma \ref{fdsess}, every singular extension of $A_\Phi(G)$ by $X$ splits strongly. Now the conclusion follows from \cite[Corollary 1.9.8]{Da}.

Another proof of this follows from \cite[Theorem 4.18]{BDL}, Theorem \ref{SSAP} and Theorem \ref{ACIH}.
\end{proof}
\begin{proposition}\label{sa}
Suppose that $A_\Phi(G)$ posseses an approximate identity. Then all the singular finite-dimensional extensions of $A_\Phi(G)$ split algebraically.
\end{proposition}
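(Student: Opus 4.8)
The plan is to reduce the statement to one-dimensional bimodules via Corollary \ref{fdm}, dispose of the essential cases using Lemma \ref{fdsess}, and handle the single remaining ``annihilator'' case directly with the approximate identity. Fix a singular finite-dimensional extension $0\to X\to B\xrightarrow{\pi}A_\Phi(G)\to 0$. Since $X$ is finite-dimensional it is complemented in $B$, so I would choose a continuous linear right inverse $\rho$ of $\pi$ and form the continuous $2$-cocycle $T(a,b)=\rho(a)\rho(b)-\rho(ab)\in X$; the extension splits algebraically precisely when $T$ is an algebraic coboundary, i.e.\ $T=\delta S$ for some (not necessarily continuous) linear map $S\colon A_\Phi(G)\to X$. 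Writing $X\cong\bigoplus_{i=1}^{n}\mathbb{C}_{x_i,y_i}$ by Corollary \ref{fdm} and $T=(T_1,\dots,T_n)$ accordingly, and treating the coordinates separately, it suffices to split algebraically every singular extension of $A_\Phi(G)$ by a single $\mathbb{C}_{x,y}$ with $x,y\in G\cup\{0\}$.

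If $x\neq 0$ then $\mathbb{C}_{x,y}$ is essential as a left module: regularity of $A_\Phi(G)$ provides $u$ with $u(x)=1$, so $A_\Phi(G)\cdot\mathbb{C}_{x,y}=\mathbb{C}_{x,y}$, and Lemma \ref{fdsess} already yields a strong, hence algebraic, splitting. If $x=0$ but $y\neq 0$, I would pass to the opposite algebra: as $A_\Phi(G)$ is commutative, $A_\Phi(G)^{\mathrm{op}}=A_\Phi(G)$, and under this identification the bimodule $\mathbb{C}_{0,y}$ becomes $\mathbb{C}_{y,0}$, which is essential as a left module; applying Lemma \ref{fdsess} to the reversed extension $0\to X\to B^{\mathrm{op}}\to A_\Phi(G)^{\mathrm{op}}\to 0$ and transporting the resulting homomorphism back gives a strong, hence algebraic, splitting of the original extension. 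The only remaining case, and the only place where the hypothesis that $A_\Phi(G)$ has an approximate identity enters, is $x=y=0$, i.e.\ the trivial bimodule $\mathbb{C}_{0,0}$.

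So suppose $X=\mathbb{C}_{0,0}$; write $J=\ker\pi$, so that $J^2=0$ and, the module action being trivial, $BJ=JB=0$. The crux is the following well-definedness statement: if $u_1,\dots,u_m,v_1,\dots,v_m\in A_\Phi(G)$ satisfy $\sum_i u_iv_i=0$, then $\sum_i T(u_i,v_i)=0$. To see this, set $j=\sum_i\rho(u_i)\rho(v_i)$; since $\rho(\sum_i u_iv_i)=0$ we have $j=\sum_i T(u_i,v_i)\in J$. Let $\{e_\alpha\}$ be an approximate identity for $A_\Phi(G)$. On one hand $\rho(e_\alpha)j=0$ because $BJ=0$; on the other, using $\rho(e_\alpha)\rho(u_i)=\rho(e_\alpha u_i)+T(e_\alpha,u_i)$ together with $T(e_\alpha,u_i)\rho(v_i)\in JB=0$, a short computation gives $\rho(e_\alpha)j=\sum_i T(e_\alpha u_i,v_i)$. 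Hence $\sum_i T(e_\alpha u_i,v_i)=0$ for every $\alpha$, and since $e_\alpha u_i\to u_i$ and $T$ is separately continuous, passing to the limit over the net yields $\sum_i T(u_i,v_i)=0$. Consequently $S_0\colon A_\Phi(G)^2\to\mathbb{C}$, $S_0(\sum_i u_iv_i):=\sum_i T(u_i,v_i)$, is a well-defined linear functional with $S_0(ab)=T(a,b)$. Extending $S_0$ arbitrarily to a linear functional $S$ on $A_\Phi(G)$ (a Hamel-basis argument) and setting $\theta(a)=\rho(a)+S(a)$, with $S(a)$ regarded as an element of $J\subseteq B$, produces an algebra homomorphism with $\pi\theta=\mathrm{id}$: the relations $JB=BJ=J^2=0$ annihilate every cross term in $\theta(a)\theta(b)$, so multiplicativity reduces to $S(ab)=T(a,b)$, which holds since $ab\in A_\Phi(G)^2$. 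This is the required algebraic splitting.

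The heart of the proof is the well-definedness of $S_0$ in the case $\mathbb{C}_{0,0}$; it is there that one must combine the annihilator relations $BJ=JB=0$ with the approximate identity and the continuity of $T$, everything else being bookkeeping. One should also stress why this only gives an algebraic and not a strong splitting: the functional $S_0$ need not be continuous on $A_\Phi(G)^2$, unlike in the preceding theorem, where a \emph{bounded} approximate identity together with Cohen's factorization theorem controls the norm of $S_0$ and upgrades the conclusion to a strong splitting.
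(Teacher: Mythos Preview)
Your argument is correct and follows the same architecture as the paper's proof: decompose $X$ into one-dimensional pieces $\mathbb{C}_{x_i,y_i}$ via Corollary~\ref{fdm} and treat each summand according to whether $x_i,y_i$ vanish. The difference lies only in how each case is dispatched. The paper works cohomologically: for $x_i$ or $y_i$ nonzero it quotes \cite[pp.~21--22]{BDL} to obtain $\widetilde{H}^2(A_\Phi(G),\mathbb{C}_{x_i,y_i})=0$, for the annihilator case $\mathbb{C}_{0,0}$ it quotes \cite[Proposition~2.9.34]{Da}, and it concludes via \cite[Corollary~2.8.13]{Da}. You instead invoke the paper's own Lemma~\ref{fdsess} for the left-essential summands, handle the right-essential case $\mathbb{C}_{0,y}$ by the opposite-algebra trick (legitimate since $A_\Phi(G)$ is commutative), and give a direct cocycle computation for $\mathbb{C}_{0,0}$, which is in effect a proof of the relevant special case of \cite[Proposition~2.9.34]{Da}. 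Your route is more self-contained and makes transparent exactly where the (unbounded) approximate identity is used and why only an algebraic splitting results; the paper's route is terser by outsourcing to the literature.
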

\begin{proof}
Let $X$ be a finite-dimensional $A_\Phi(G)$-bimodule. By Corollary \ref{fdm}, it follows that $\widetilde{H}^2(A_\Phi(G),X)=\underset{i=1}{\overset{n}{\oplus}}\widetilde{H}^2(A_\Phi(G),\mathbb{C}_{x_i,y_i}),$ where $n$ is the dimension of $X.$ Note that if $x_i$ or $y_i$ are non-zero, then by \cite[Pg. 21-22]{BDL}, it follows that $\widetilde{H}^2(A_\Phi(G),\mathbb{C}_{x_i,y_i})=\{0\}.$ Further, by \cite[Proposition 2.9.34]{Da}, it follows that $\widetilde{H}^2(A_\Phi(G),\mathbb{C}_{0,0})=\{0\}.$ Thus the proof follows from \cite[Corollary 2.8.13]{Da}.
\end{proof}

\section*{Acknowledgement}
The first author would like to thank the University Grants Commission, India, for research grant.

\end{document}